\documentclass[12pt,a4paper]{article} 
\usepackage{amsmath,amssymb,amsthm,amsfonts,amscd,euscript,verbatim, t1enc, newlfont}
\usepackage{hyperref}
\usepackage{yfonts}
\usepackage{graphicx}
\usepackage[all]{xy}
\hfuzz2pt
\theoremstyle{definition}
\newtheorem{theo}{Theorem}[subsection]

\newtheorem{theore}{Theorem}[section]
\newtheorem{pr}[theo]{Proposition}

 \newtheorem{lem}[theo]{Lemma}
 \newtheorem{coro}[theo]{Corollary}
  
    \newtheorem{question}[theore]{Question}
\theoremstyle{remark}
\newtheorem{rema}[theo]{Remark}

\theoremstyle{definition}
\newtheorem{defi}[theo]{Definition}

\newcommand\cu{\underline{C}}
\newcommand\du{\underline{D}}

\newcommand\au{\underline{A}}

\newcommand\q{{\mathbb{Q}}}
\newcommand\obj{\operatorname{Obj}}
\newcommand\hw{{\underline{Hw}}}

\newcommand\zop{{\mathbb{Z}[\frac{1}{p}]}}
\newcommand\n{{\mathbb{N}}}
\newcommand\z{{\mathbb{Z}}}

\newcommand\ns{\{0\}}

\newcommand\id{\operatorname{id}}
 \newcommand\lan{\langle}
\newcommand\ra{\rangle}

\DeclareMathOperator\kar{\operatorname{Kar}}

\DeclareMathOperator\co{\operatorname{Cone}}
\DeclareMathOperator\mo{\operatorname{Mor}}

\DeclareMathOperator\inli{\varinjlim}

\newcommand\hu{\underline{H}}

\newcommand\dmgep
{{DM^{\,\, {eff}}_{\scalebox{0.6}{gm},\zop}}{}}

\newcommand\dmgepr 
{{DM^{\,\, {eff}}_{\scalebox{0.5}{gm},\zop,R}}{}}

\newcommand\dmgepq 
{{DM^{\,\, {eff}}_{\scalebox{0.5}{gm},\zop,\q}}{}}


\numberwithin{equation}{subsection}

\begin{document}

\title
 {On the weight lifting property for localizations of triangulated categories}
 \author{
Mikhail V. Bondarko, Vladimir A. Sosnilo\thanks{Research is supported by the Russian Science Foundation grant no. 16-11-10073.
}}
 \maketitle
\begin{abstract}
 As we proved earlier, for a triangulated category $\cu$ endowed with a weight structure $w$ and a triangulated subcategory $\du$ of $\cu$ (strongly) generated by 
cones of a set of morphisms $S$ in the heart $\hw$ of $w$  there exists a weight structure $w'$   on the Verdier quotient  $\cu'=\cu/\du$ such that 
 the localization functor  $\cu 
\to \cu'$ is weight-exact (i.e., "respects weights"). 
The goal of this paper is to find  conditions ensuring that  for any object of $\cu'$ of non-negative (resp. non-positive) weights there exists its preimage in $\cu$ satisfying the same condition; we call  a certain stronger version of the 
latter assumption 
 the left (resp., right) 
{\it weight lifting property}. We prove that these  weight lifting properties are fulfilled whenever the set $S$ 
satisfies the 
 corresponding (left or right) 
 Ore conditions. Moreover, if $\du$ is generated by objects of  $\hw$ then any object of $\hw'$ lifts to $\hw$.
	
We apply these results to obtain some new results on Tate motives and finite spectra (in the stable homotopy category). 
 Our results are also applied to the study of the so-called Chow-weight homology in another paper.

\end{abstract}

\tableofcontents

\section{Introduction}


Let $X$ be a finite spectrum. Assume that the stable homotopy groups $\pi^{s}_i(X)$ 
are torsion for $i\le n$ for some $n \in \z$. 
It is natural to ask whether $X$ belongs to the extension-closure of the union of the class of
finite torsion spectra with the class of finite $n$-connected spectra. 
Below we will establish this statement in the following strong form: there exists a distinguished triangle $T \to X \to M \to T[1]$ in $SH^{fin}$ such that 
 $T$ is torsion and $M$ is an $n$-connected  spectrum.


In this paper we 
 prove a generalization of the aforementioned result that turns out to be important for studying the 
properties of the so-called Chow-weight homology theories.

To state the question we will use the language of weight structures; this notion was  
introduced by 
 the first author and independently by D. Pauksztello (under the name of co-$t$-structures). Weight structures on triangulated categories are important counterparts of $t$-structures; similarly to $t$-structures, weight structures on a triangulated category $\cu$ are defined in terms of 
classes of objects $\cu_{w\ge 0}$ and $\cu_{w\le 0}$ satisfying certain axioms (see Definition \ref{dwstr}).  We will say that objects 	of the first (resp., of the second) class have non-negative  (resp., of non-positive) weights.

Let $\cu$ be a triangulated category endowed with a weight structure $w$, $\du$ be its subcategory such that the localized category $\cu'=\cu/\du$ possesses a weight structure $w'$ 
for which the localization functor $L$ is weight-exact (i.e., preserves objects of non-positive and of non-negative weights). 
Then $\cu'_{w'\ge 0}$ equals the class $ \kar_{\cu'}(L(\cu_{w\ge 0}))$ and $\cu'_{w'\le 0} = \kar_{\cu'}(L(\cu_{w\le 0}))$ (see 
 Remark 2.1.5(6) of \cite{bpure} or Proposition 3.1.1 of \cite{bsnew}; here $\kar_{\cu'}(A)$ denotes the class  of $\cu'$-retracts of  elements of a class  $A\subset \obj \cu'$). 
Yet to prove the statement formulated above it is important to know whether  we can avoid the Karoubi-closures  here, i.e., whether $\cu'_{w'\ge 0}$ essentially equals the image $L(\cu_{w\ge 0}$ (resp., whether $\cu'_{w'\le 0}=L(\cu_{w\le 0}$). 
 So we want to ask whether  any object of $\cu'$ whose "weights" satisfy a certain inequality lifts to an object of $\cu$ satisfying the same weight bound.\footnote{Note also: since  $\cu'_{w'\ge 0}= \kar_{\cu'}(L(\cu_{w\ge 0}))$,  $\cu'_{w'\ge 0}$  (essentially) equals  $L(\cu_{w\ge 0})$ if and only if  for any $X\in \cu_{w\ge 0}$ all $\cu'$- retracts of $L(X)$  lift to elements of $\cu_{w\ge 0}$, i.e.,  $L(\cu_{w\ge 0})$ should be (essentially) {\it Karoubi-closed} in $\cu'$.}

It turns out to be reasonable to ask this question in the following stronger form.

\begin{question}\label{q1}
For $X\in \obj \cu$ assume that $L(X)$ belongs to $\cu'_{w'\ge 0}$. 
Does there exist a distinguished triangle $T\to X\to X_{\ge 0}\to T[1]$ such that $T\in \obj \du$ and $X_{\ge 0}\in \cu_{w\ge 0}$.
\end{question}

If the answer to this question is positive for all $X$ (such that $L(X)\in \cu'_{w'\ge 0}$)  then we will say that the triple $(\cu,w,\du)$ satisfies the {\it left weight lifting property}.\footnote{Note that the localization functor $L:\cu\to \cu'$ "will not change" if we replace $\du$ by its dense triangulated subcategory; still this replacement may affect the answer to Question \ref{q1}.}

Dually, $(\cu,w,\du)$ is said to satisfy the {\it right weight lifting property} if for any $X$ such that $L(X) \in \cu'_{w'\le 0}$ there exists a distinguished triangle 
$T\to X_{\le 0}\to X\to T$ such that $T\in \obj \du$ and $X_{\le 0}\in \cu_{w\le 0}$.

Now let $\du$ be the full triangulated subcategory of $\cu$ strongly generated by cones of 
 some set $S$ of $\hw$-morphisms. 
Then the category $\cu'=\cu/\du$ 
 does possess a weight structure such that 
$L$ is weight-exact (see Theorem 4.2.3(1) of \cite{bososn}). 
Below we will prove that $L$ satisfies the  left weight lifting property whenever $S$ satisfies the left Ore condition (see Theorem \ref{dirsum} below); the corresponding "right" version of this statement is also valid (being the categorical dual of the "left" one). 
This statement certainly can be applied to the 
 stable homotopy category and to 
 Tate motives (see Remark \ref{tappl}). We obtain the statements mentioned in the beginning of this section as particular cases.
Another useful  corollary is as follows: any object of $\hw'$ lifts to an object of $\hw$ if $\du$ is a bounded subcategory of $\cu$ such that $w$ restricts to it; 
it was applied to certain "relative motives" in 
\cite{bondiv}. 
Furthermore, our theorem becomes very useful for the study of the Chow-weight homology groups (see Theorems 3.2.3 and  4.2.5  of \cite{bosocwh}).

Next, for an object $X$ of $\cu$ and a 
 sequence of subcategories $\dots \subset \du_{i+1} \subset\du_i\subset \dots \subset \cu$ such that  $w$ restricts to them (note that in this case there exist weight structures on the localizations $\cu/\du_i$ such that all the localization functors $\cu=\to \cu/\du_i$ are weight-exact), one can study the "refined" weights of $X$, i.e., weights of $X$ modulo  each of  $\du_i$. 
 Let $S$ be a set of morphisms in the heart of $w$ and $K$ be a triangulated subcategory generated by cones of $S$. One can consider the subcategories $\du'_i$  of $\cu/K$ generated by the images of $\du_i$ to get 
 certain refined weights for $\cu/K$. 
We would like to know whether an object $X$ of $\cu$ having certain weights in all of the localizations $(\cu/K)/\du'_i$ belongs to the extension closure of the union of $K$ and the set of objects having the corresponding weights in $\cu/\du_i$. 
 In \ref{apppl} we give  a 
certain affirmative answer to the latter question in the case  where $S$ is the set of multiplications by some integers.

We apply 
 the corresponding  Proposition \ref{torsindep} 
  to the study of Tate motives and their weight $0$ motivic homology (see Remark \ref{tateappl}(3)). 
In Theorem 4.2.5   of \cite{bosocwh} we also  apply this proposition to the case $\cu = \dmgep$ and $\du_i = \dmgep(i)$; this yields quite 
non-trivial properties of the so-called Chow-weight homology groups. 

Now we describe the structure of the paper; some more information of this sort can be found in the beginnings of sections.

In Section \S\ref{prelim} we recall some basics on weight structures. In particular, we recall certain results about localizations of triangulated categories with weight structures (see Proposition 
 \ref{p2loc}). 

Section \S\ref{oretwlp} is dedicated to the proof of Theorem \ref{dirsum} (which is the central result of this paper). It states that the localization functor $\cu \to \cu/\du$ satisfies the  left weight lifting property whenever $\du$ is  strongly generated by  cones of some set  $S \subset \mo \hw$ satisfying the left Ore condition. 
We also prove that objects of the heart of $\cu/\du$ lift to $\hw$ whenever $\du$ is strongly generated by a subset of $\cu_{w=0}$.

In \S\ref{sappl} we  give some applications of our theorem. 
First we consider the stable homotopy category and the category of Tate  motives. 
Next we compare the "refined" weights of  $X\in \obj \cu$ in a system of localizations $\cu/\du_i$ with  its weights in the categories $(\cu/\du_i) \otimes \z[J^{-1}]$ (where $\du_i$ are some triangulated subcategories to which $w$ restricts). 

The second author thanks Sergey O. Ivanov for his interesting comments concerning the weight lifting property and Sergey Sinchuk for correcting the 
 English.

\section{Preliminaries}\label{prelim}

In \S\ref{snotata} we introduce some notation and conventions related (mostly) to triangulated categories.

In \S\ref{ssws} we recall some basics on weight structures.

In \S\ref{swesloc}  we recall 
 in addition how weight structures "behave under localizations".

\subsection{Some (categorical) notation and lemmas}\label{snotata}

\begin{itemize}


\item Given a category $C$ and  $X,Y\in\obj C$, we denote by
$C(X,Y)$ the set of morphisms from $X$ to $Y$ in $C$.

\item For categories $C',C$ we write $C'\subset C$ if $C'$ is a full 
subcategory of $C$.

\item Given a category $C$ and  $X,Y\in\obj C$, we say that $X$ is a {\it
retract} of $Y$ 
 if $\id_X$ can be 
 factored through $X$.\footnote{If $C$ is triangulated or abelian,
then $X$ is a retract of $Y$ if and only if $X$ is its direct summand.}

\item An additive subcategory $\hu$ of additive category $C$ 
is called {\it Karoubi-closed}
  in $C$ if it
contains all retracts of its objects in $C$.
The full subcategory $\kar_{C}(\hu)$ of additive category $C$ whose objects
are all retracts of objects of a subcategory $\hu$ (in $C$) will be
called the {\it Karoubi-closure} of $\hu$ in $C$. 

\item The {\it Karoubization} $\kar(B)$ (no lower index) of an additive
category $B$ is the category of ``formal images'' of idempotents in $B$.
So, its objects are pairs $(A,p)$ for $A\in \obj B,\ p\in B(A,A),\ p^2=p$, and the morphisms are given by the formula 
\begin{equation}\label{mthen}
\kar(B)((X,p),(X',p'))=\{f\in B(X,X'):\ p'\circ f=f \circ p=f \}.\end{equation}
 The correspondence  $A\mapsto (A,\id_A)$ (for $A\in \obj B$) fully embeds $B$ into $\kar(B)$.
 Besides, $\kar(B)$ is {\it Karoubian}, i.e., 
 any idempotent morphism yields a direct sum decomposition in 
 $\kar(B)$. Equivalently, $B$ is Karoubian if (and only if) the canonical embedding $B \to \kar
(B)$ is an equivalence of categories.
 Recall also that $\kar(B)$ is canonically triangulated if $B$ is (see \cite{bashli}).

\item $\cu$ below will always denote some triangulated category;
usually it will
be endowed with a weight structure $w$. 

\item For any  $A,B,C \in \obj\cu$ we will say that $C$ is an {\it extension} of $B$ by $A$ if there exists a distinguished triangle $A \to C \to B \to A[1]$.

\item For any $E,E' \subset \obj \cu$ we denote by $E \star E'$ the class of all  extensions of elements of $E'$ by elements of $E$.

\item A class $D\subset \obj \cu$ is said to be  {\it extension-closed}
    if it 
		is closed with respect to extensions and contains $0$. We will call the smallest extension-closed subclass 
of objects of $\cu$ that  contains a given class $B\subset \obj\cu$ 
  the {\it extension-closure} of $B$. 

\item Given a class $D$ of objects of $\cu$ we denote by $\lan D\ra$ the smallest full 
triangulated subcategory of $\cu$ containing $D$. We will also call  $\lan D\ra$  the triangulated category {\it strongly generated} by $D$. 

\item We will often localize $\cu$ by a triangulated subcategory $\du$, whereas $\du$ will often be equal to $\lan D\ra$. 
To avoid set-theoretic difficulties we will always assume in these occurrences that $\du$ is essentially small and the class of $\cu$-isomorphism classes of elements of $D$
is a set. Respectively, if $D$ consists of cones of a class of morphism $S$ then we will assume (when considering $\cu/\du$)  the class of $\cu$-isomorphism classes of elements of $S$ is a set also.


\item For $X,Y\in \obj \cu$ we will write $X\perp Y$ if $\cu(X,Y)=\ns$. For
$D,E\subset \obj \cu$ we write $D\perp E$ if $X\perp Y$ for all $X\in D,\
Y\in E$.
Given $D\subset\obj \cu$ we  denote by $D^\perp$ the class
$$\{Y\in \obj \cu:\ X\perp Y\ \forall X\in D\}.$$
Dually, ${}^\perp{}D$ is the class
$\{Y\in \obj \cu:\ Y\perp X\ \forall X\in D\}$.

\item Given $f\in\cu (X,Y)$, where $X,Y\in\obj\cu$, we will call the third vertex
of (any) distinguished triangle $X\stackrel{f}{\to}Y\to Z$ a {\it cone} of
$f$ (recall that different choices of cones are connected by
non-unique isomorphisms).

\item For an additive category $B$ we  denote by $K(B)$ the homotopy category
of (cohomological) complexes over $B$. Its full subcategory of
bounded (resp., bounded below) complexes will be denoted by $K^b(B)$ (resp., $K^-(B)$). We will write $X=(X^i)$ if $X^i$ are the terms of the complex $X$. 


\end{itemize}

\subsection{Weight structures: basics}\label{ssws}

\begin{defi}\label{dwstr}
 A pair of subclasses $\cu_{w\le 0},\cu_{w\ge 0}\subset\obj \cu$ 
will be said to define a weight
structure $w$ for a triangulated category  $\cu$ if 
they  satisfy the following conditions.

(i) $\cu_{w\ge 0},\cu_{w\le 0}$ are 
Karoubi-closed in $\cu$
(i.e., contain all $\cu$-retracts of their elements).

(ii) {\bf Semi-invariance with respect to translations.}

$\cu_{w\le 0}\subset \cu_{w\le 0}[1]$, $\cu_{w\ge 0}[1]\subset
\cu_{w\ge 0}$.

(iii) {\bf Orthogonality.}

$\cu_{w\le 0}\perp \cu_{w\ge 0}[1]$.

(iv) {\bf Weight decompositions}.

 For any $M\in\obj \cu$ there
exists a distinguished triangle
\begin{equation}\label{wd}
X\to M\to Y
{\to} X[1]
\end{equation} 
such that $X\in \cu_{w\le 0},\  Y\in \cu_{w\ge 0}[1]$.
\end{defi}

We will also need the following definitions.

\begin{defi}\label{dwso}

Let $i,j\in \z$; assume that $\cu$ is a triangulated category endowed with a weight structure $w$.

\begin{enumerate}
\item\label{iII} The category $\hw\subset \cu$ whose objects are
$\cu_{w=0}=\cu_{w\ge 0}\cap \cu_{w\le 0}$ and morphisms are $\hw(Z,T)=\cu(Z,T)$ for
$Z,T\in \cu_{w=0}$,
 is called the {\it heart} of 
$w$.

\item\label{iIII} $\cu_{w\ge i}$ (resp. $\cu_{w\le i}$, resp. $\cu_{w= i}$) will denote $\cu_{w\ge
0}[i]$ (resp. $\cu_{w\le 0}[i]$, resp. $\cu_{w= 0}[i]$).

\item\label{iIV} $\cu_{[i,j]}$  denotes $\cu_{w\ge i}\cap \cu_{w\le j}$; so, this class  equals $\ns$ if $i>j$.

Elements of $\cu_{[i,j]}$ are said to have length at most $j-i$ (for any $i,j\in \z$).

$\cu^b\subset \cu$ will be the category whose object class is $\cup_{i,j\in \z}\cu_{[i,j]}$.

\item\label{iV} We will  say that $(\cu,w)$ is {\it  bounded}  if $\cu^b=\cu$ (i.e., if $\cup_{i\in \z} \cu_{w\le i}=\obj \cu=\cup_{i\in \z} \cu_{w\ge i}$).

\item\label{iVI}  Let $\cu'$  be a triangulated category endowed with a weight structure  $w'$; let $F:\cu\to \cu'$ be an exact functor.

We will say that $F$ is {\it left weight-exact} (with respect to $w,w'$) if it maps $\cu_{w\le 0}$ to $\cu'_{w'\le 0}$; it will be called {\it right weight-exact} if it maps $\cu_{w\ge 0}$ to $\cu'_{w'\ge 0}$. $F$ is called {\it weight-exact} if it is both left and right weight-exact.

Let $\cu$ and $\cu'$ 
be triangulated categories endowed with
weight structures $w$ and
 $w'$, respectively; let $F:\cu\to \cu'$ be an exact functor.

\item\label{iVII}  Let $\du$ be a full triangulated subcategory of $\cu$.

We will say that $w$ {\it restricts} to $\du$ if the classes
$\obj \du\cap \cu_{w\le
 0}$ and $\obj \du\cap \cu_{w\ge
 0}$ give a weight structure  $w_{\du}$ on $\du$.

\end{enumerate}
\end{defi}

\begin{rema}\label{rstws}

1. A  simple (and yet quite useful) example of a weight structure comes from the stupid
filtration on 
the triangulated categories $K^b(B)\subset K(B)$ 
for an arbitrary additive category
 $B$; see  Remark 1.2.3(1) of \cite{bonspkar} for more detail. 


2. 
A weight decomposition (of any $M\in \obj\cu$) is (almost) never canonical. 

Yet for  $m\in \z$ we will often need some choice of a weight decomposition of $M[-m]$ shifted by $[m]$. So we consider a distinguished triangle \begin{equation}\label{ewd} w_{\le m}M\to M\to w_{\ge m+1}M \end{equation} 
for some $ w_{\ge m+1}M\in \cu_{w\ge m+1}$, $ w_{\le m}M\in \cu_{w\le m}$.  

 We will often use this notation below (though $w_{\ge m+1}M$ and $ w_{\le m}M$ are not canonically determined by $M$).
 
3. In the current paper we use the ``homological convention'' for weight structures; 
it was previously used in \cite{wildint}, \cite{bososn}, \cite{bonspkar},  and in \cite{bpure}, whereas in 
\cite{bws}  the ``cohomological" convention was used. In the latter convention 
the roles of $\cu_{w\le 0}$ and $\cu_{w\ge 0}$ are interchanged, i.e., one
considers   $\cu^{w\le 0}=\cu_{w\ge 0}$ and $\cu^{w\ge 0}=\cu_{w\le 0}$. 

Besides, in \cite{bws} both "halves" of $w$ were required to be  additive. Yet this additional restriction is easily seen to follow from the remaining axioms; see Remark 1.2.3(4) of \cite{bonspkar}.
 
\end{rema}

Let us recall some basics on weight structures. 

\begin{pr} \label{pbw}
Let $\cu$ be a triangulated category, $n\ge 0$; we will assume 
that $w$ is a fixed 
weight structure on it. 

\begin{enumerate}


\item \label{idual}
The axiomatics of weight structures is self-dual, i.e., for $\du=\cu^{op}$
(so $\obj\du=\obj\cu$) there exists the (opposite)  weight
structure $w'$ for which $\du_{w'\le 0}=\cu_{w\ge 0}$ and
$\du_{w'\ge 0}=\cu_{w\le 0}$.

 \item\label{iwd0} 
 If $M\in \cu_{w\ge -n}$ 
 then $w_{\le 0}M\in \cu_{[-n,0]}$.

\item\label{igenw0}
Assume $w$ is bounded below.
Then $\cu_{w\le 0}$ is the
   extension-closure of $\cup_{i\le 0}\cu_{w=i}$ in $\cu$;
   
Dually, if $w$ is bounded above then $\cu_{w\ge 0}$ is the
  extension-closure of $\cup_{i\ge 0} \cu_{w=i}$ in $\cu$.
	
	\item\label{i01}  $\cu_{[0,1]}$ consists exactly of cones of morphisms in $\hw$ (in $\cu$).

 \item\label{iort}
 $\cu_{w\ge 0}=(\cu_{w\le -1})^{\perp}$ and $\cu_{w\le 0}={}^{\perp} \cu_{w\ge 1}$; hence these classes are extension-closed.

\item\label{iwe} Assume that $w'$ is a weight structure
for a triangulated category $\cu'$ and that $w$ is bounded. Then an exact functor $F:\cu\to \cu'$ is
weight-exact if and only if $F(\cu_{w=0})\subset \cu'_{w'=0}$.

\end{enumerate}
\end{pr}
\begin{proof}


All of these statements 
 can be found in \cite{bws} (pay attention to Remark
\ref{rstws}(3)!). 

\end{proof}

\subsection{Weight structures in localizations (reminder)}\label{swesloc}

First we recall the following general fact.

\begin{pr}\label{pdesc}
Let $\cu$ and $\cu'$ be triangulated categories endowed with weight structures  $\cu$ and $\cu'$, respectively; let $F:\cu\to \cu'$ be an exact functor that is essentially surjective on objects.
 Then $F$ is  weight-exact with respect to $(w,w')$ if and only if $w'=(\kar_{\cu'}(F(\cu_{w\le 0})), \kar_{\cu'}(F(\cu_{w\ge 0}))$.
\end{pr}
\begin{proof}
This is an easy consequence of \cite[Proposition 1.2.4(10)]{bpure} (cf. Remark 2.1.5(6) of ibid.).

\end{proof}

\begin{rema}\label{rdesc}
This certainly means that there is at most one weight structure $w'$ such that $F$ is weight-exact; this weight structure exists whenever  $(\kar_{\cu'}(F(\cu_{w\le 0})), \kar_{\cu'}(F(\cu_{w\ge 0}))$ is a weight structure on $\cu'$.

In the latter case we will say that $w$ {\it descends} to $\cu'$, and the corresponding $w'$ is the {\it descended} weight structure. We will use the notation $w'$ for it whenever this will cause no ambiguity. 

Below we will be interested in the case where $F$ is some Verdier localization functor (certainly, $F$ is surjective on objects in this case).
\end{rema}

Now we recall that weight structures descend to localizations "surprisingly often".

\begin{pr}\label{p2loc}
Let $\du\subset \cu$ be a 
triangulated subcategory of
$\cu$ strongly generated by some set $B$ of elements of $\cu_{[0,1]}$. Denote the localization functor $\cu\to \cu'$ by $L$, where $\cu'= \cu/\du$ is the Verdier quotient of $\cu$ by $\du$.

For any $i,j\in\z\cup\{-\infty,+\infty\}$ we denote by
$B_{[i,j]}$ the extension-closure of $\cup_{i\le k \le j}B[k]$.
 We will also use the notation $B_{\ge i} = B_{[i,+\infty]}$ and $B_{\le j} = B_{[-\infty, j]}$.

Then the following statements are valid.

1. For any element $T$ of $B_{[m,n]}$ there exists a distinguished triangle
$T_1 \to T \to T_2 \to T_1[1]$, where $T_1 \in B_{[m,0]}$ and $T_2 \in B_{[1, n]}$.

2. $B_{[i,j]} \subset \cu_{[i,j+1]}$

3. $w$ descends to  $\cu'$, and the heart $\hw'$ is naturally equivalent to $\kar_{\cu'}(\hw[S^{-1}]_{add})$, 
 where $\hw[S^{-1}]_{add}$ is a certain additive localization of the category $\hw$ by $S$ that was defined in \cite{bososn}.
\end{pr}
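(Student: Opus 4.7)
The plan is to establish Part 2 first (by a direct closure argument), use its weight bounds in Part 1 (a structural statement about iterated extensions proved via octahedra), and then combine both in Part 3 to verify the weight structure axioms for $w_{\cu/\du}$. Part 2 is immediate: each generator $B[k]$ for $i \le k \le j$ lies in $\cu_{[k, k+1]} \subset \cu_{[i, j+1]}$ since $B \subset \cu_{[0, 1]}$, and $\cu_{[i, j+1]} = \cu_{w \ge i} \cap \cu_{w \le j+1}$ is extension-closed by the weight axioms, so it contains the extension-closure $B_{[i, j]}$.

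For Part 1, I would let $\mathcal{E}$ denote the class of $T \in B_{[m, n]}$ admitting a weak weight decomposition as claimed, and check that $\mathcal{E}$ contains the generators (trivially, take $T_2 = 0$ for $k \le 0$ or $T_1 = 0$ for $k \ge 1$) and is closed under extensions; minimality of the extension-closure then forces $\mathcal{E} = B_{[m, n]}$. Given an extension $T \to U \to T'$ with $T, T' \in \mathcal{E}$ via $T_1 \to T \to T_2$ and $T'_1 \to T' \to T'_2$, two successive applications of the octahedral axiom produce a four-step filtration of $U$ whose consecutive quotients are $T_1, T_2, T'_1, T'_2$; the middle sub-object fits in a triangle $T_2 \to W \to T'_1 \to T_2[1]$. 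The crucial observation is that, by Part 2, $T'_1 \in \cu_{w \le 1}$ and $T_2[1] \in \cu_{w \ge 2}$, so the orthogonality axiom of $w$ gives $\cu(T'_1, T_2[1]) = \ns$; hence $W \cong T_2 \oplus T'_1$, and the other split triangle $T'_1 \to W \to T_2$ lets us re-associate the filtration, bunching the two "negative" quotients together and the two "positive" ones together. A final octahedron then produces a triangle $U_1 \to U \to U_2$ with $U_1 \in B_{[m, 0]}$ (an extension of $T'_1$ by $T_1$) and $U_2 \in B_{[1, n]}$ (an extension of $T'_2$ by $T_2$). This combinatorial re-association of the four-step filtration is the main technical obstacle; the orthogonality-splitting trick is what prevents the obvious induction from stalling at a mixed intermediate object.

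For Part 3, I would define $(\cu/\du)_{w \le 0} = \kar_{\cu/\du}(L(\cu_{w \le 0}))$ and symmetrically for $w \ge 0$. Karoubi-closedness, shift semi-invariance, and the existence of weight decompositions (via applying $L$ to any weight decomposition in $\cu$) are immediate. The substantive axiom is orthogonality: given $X \in \cu_{w \le 0}$, $Y \in \cu_{w \ge 1}$, and $f\colon L(X) \to L(Y)$ represented by a roof $X \xleftarrow{s} X' \xrightarrow{g} Y$ with $D := \co(s) \in \du$, I would argue $f = 0$ as follows. Since $\du = \lan B \ra$, the object $D$ lies in some $B_{[-N, N]}$, and Part 1 supplies a weak weight decomposition $D_1 \to D \to D_2$ with $D_1 \in B_{[-N, 0]}$, $D_2 \in B_{[1, N]}$. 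The composition $X \to D \to D_2$ lies in $\cu(X, D_2) = \ns$ by orthogonality in $\cu$, so $X \to D$ lifts to a map $\beta\colon X \to D_1$; an octahedron on $X \xrightarrow{\beta} D_1 \to D$ then produces a triangle $X''' \to X' \to D_2[-1]$. Here $X''' \in \cu_{w \le 0}$ (as an extension of $X$ by $D_1[-1] \in \cu_{w \le 0}$, using Part 2), and the map $u\colon X''' \to X'$ has cone $D_2[-1] \in \du$; the composition $g \circ u \in \cu(X''', Y) = \ns$ by orthogonality in $\cu$, so the equivalent roof $X \xleftarrow{s u} X''' \xrightarrow{0} Y$ exhibits $f = 0$. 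Finally, the identification of the heart with $\kar_{\cu/\du}(\hw[S^{-1}]_{add})$ is the content of Theorem 4.2.2 of \cite{bososn}, which I would cite directly rather than reproducing.
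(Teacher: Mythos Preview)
Your argument is correct in all three parts. The paper itself does not prove this proposition directly: it simply cites \cite{bososn} (Proposition 2.1.3(1), Remark 2.1.4(1), and Theorem 4.2.2) for the three assertions respectively, so there is no in-paper proof to compare against line by line.

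That said, a brief remark on how your reconstruction relates to the surrounding material. Your Part~1 argument---building a four-term filtration, splitting the middle via $\cu(T'_1, T_2[1]) = \ns$, and re-associating---is exactly the mechanism behind the paper's Proposition~\ref{extww} (which invokes the $3\times 3$-lemma of \cite{bbd}, Proposition 1.1.11); you have simply carried it out while tracking that the pieces stay inside the extension-closures $B_{[m,0]}$ and $B_{[1,n]}$, which is the extra bookkeeping needed here. Your Part~3 orthogonality argument (replacing a roof by one whose source lies in $\cu_{w\le 0}$ using the weak weight decomposition of the cone) is precisely the technique of Proposition 2.1.3(2,3) in \cite{bososn}, which the paper later cites as Corollary input in the proof of Theorem~\ref{dirsum}. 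So while you have supplied genuine detail where the paper defers to a reference, the underlying moves are the standard ones and match what \cite{bososn} does.

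One small clarification you might add for the reader: in Part~3 you implicitly use that every object of $\du$ lies in some $B_{[-N,N]}$, i.e.\ that $\lan B\ra = \bigcup_N B_{[-N,N]}$; this is immediate since the right-hand side is a triangulated subcategory containing $B$, but it is worth a sentence.
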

\begin{proof}
All the assertions were proved in \cite{bososn}: see Proposition 2.1.3(1), Remark 2.1.4(1), and  Theorem 4.2.2 of the paper, respectively.
\end{proof}

\section{On the weight lifting properties}\label{oretwlp}

In the section we will treat  Question \ref{q1} for various localization functors. 

In \S\ref{gentr}  we 
 study extensions in triangulated categories (including so-called weak weight decompositions) and prove some lemmas related to "localizing coefficients" in triangulated categories.

In \S\ref{bcas}  we 
introduce (our versions of) the (left and right) Ore conditions 
 along with weight lifting properties; we also prove a few simple lemmas.

In \S\ref{slwp} we prove our main Theorem \ref{dirsum} (on the left weight lifting property for the localization by $\lan \co(S)\ra$, where $S$ is a set of $\hw$-morphisms satisfying the left Ore condition). We also prove that objects of the heart of $\cu'=\cu/\du$ lift to $\cu_{w=0}$ whenever $\du$ is strongly generated by a subset of $\cu_{w=0}$. 

\subsection{Properties of (weak) weight decompositions and extensions}\label{gentr}


\begin{defi}\label{dwwp}
We call a distinguished triangle $X_1 \to X \to X_2 \to X_1[1]$ a {\it weak weight decomposition} of $X$ if $X_1 \in \cu_{w\le 0}$ and $X_2 \in \cu_{w\ge 0}$.
\end{defi}

\begin{rema}\label{rwwd}
Usually we will consider weak weight decompositions 
in some triangulated subcategory $\du$ of $\cu$ (that contains a given $X$). This means that $X_1$ and $X_2$ are objects of $\du$.
Using this definition, 
Proposition \ref{p2loc}(1) can be reformulated as follows: any element of $\du$ possesses  a weak weight decomposition inside $\du$ (whenever $\du$ is strongly generated by a set of elements of $\cu_{[0,1]}$). 
\end{rema}

\begin{pr}\label{extww}
Let $X,Y \in \obj\cu$ and let $E \in X \star Y$ (see \S\ref{snotata}).
Assume that $X_1 \to X \to X_2 \to X_1[1]$ and $Y_1 \to Y \to Y_2 \to Y_1[1]$ are weight decompositions  (resp., weak weight decompositions) of $X$ and $Y$, respectively.
Then $E$ has a weight decomposition (resp., a weak weight decomposition) $E_1 \to E \to E_2 \to E_1[1]$ such that $E_1 \in X_1 \star Y_1$
 and $E_2 \in X_2 \star Y_2$.
 \end{pr}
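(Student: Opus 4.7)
The plan is to apply Verdier's $3 \times 3$ lemma to the triangle $X \to E \to Y \to X[1]$ together with the supplied decompositions of $X$ and $Y$, placing them as the top and bottom rows of a prospective $3 \times 3$ diagram whose middle column is the given triangle. The middle row produced by the lemma will be the candidate (weak) weight decomposition of $E$, and the left and right columns will provide the extension realizations $E_1 \in X_1 \star Y_1$ and $E_2 \in X_2 \star Y_2$.

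The only input required by the $3 \times 3$ lemma is a morphism $f \colon Y_1 \to X_1[1]$ such that the square built from $Y_1 \to Y \to X[1]$ and $Y_1 \xrightarrow{f} X_1[1] \to X[1]$ commutes; equivalently, the composite $Y_1 \to Y \to X[1] \to X_2[1]$ must vanish. I would verify this by orthogonality: in both the strong and the weak case one has $Y_1 \in \cu_{w \le 0}$, while $X_2 \in \cu_{w \ge 1}$ (strong) or $X_2 \in \cu_{w \ge 0}$ (weak); in either case $X_2[1] \in \cu_{w \ge 0}[1]$. The orthogonality axiom of Definition \ref{dwstr} then gives $\cu(Y_1, X_2[1]) = 0$, producing both the vanishing and the lift $f$.

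Given $f$, the $3 \times 3$ lemma delivers a distinguished middle row $E_1 \to E \to E_2 \to E_1[1]$ along with distinguished columns $X_1 \to E_1 \to Y_1 \to X_1[1]$ and $X_2 \to E_2 \to Y_2 \to X_2[1]$; by construction $E_1 \in X_1 \star Y_1$ and $E_2 \in X_2 \star Y_2$. To confirm that this really is a (weak) weight decomposition of $E$, I would invoke Proposition \ref{pbw}(\ref{iort}): the classes $\cu_{w \le 0}$ and $\cu_{w \ge 0}$ (and likewise $\cu_{w \ge 1}$) are each defined as an orthogonal class, hence are extension-closed. Since the appropriate class contains both $X_i$ and $Y_i$, it contains $E_i$ as well, which gives the required weight bounds.

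The only substantive step is the construction of the corner morphism $f$; this reduces entirely to the orthogonality computation above, and the rest of the argument is a formal diagram chase via the $3 \times 3$ lemma. I do not anticipate any other obstacle.
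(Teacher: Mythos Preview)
Your proposal is correct and follows essentially the same route as the paper: both invoke the $3\times 3$ lemma (BBD, Proposition~1.1.11) and then use extension-closedness of $\cu_{w\le 0}$ and $\cu_{w\ge 0}$ (resp.\ $\cu_{w\ge 1}$) to conclude. You have in fact supplied a detail the paper leaves implicit, namely the construction of the corner morphism $f\colon Y_1\to X_1[1]$ via the orthogonality vanishing $\cu(Y_1,X_2[1])=0$, which is exactly the commutative square required to launch the $3\times 3$ lemma.
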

\begin{proof}
The orthogonality axiom for $w$ implies that $Y_1[-1]\perp X_2$. Hence Proposition 1.1.9 of \cite{bbd} (cf. also Lemma 1.4.1(1) of \cite{bws}) implies 
the existence of  a commutative diagram
$$\begin{CD}
Y_1[-1]@>{}>> Y[-1]@>{}>> Y_2[-1] \\
@VV{}V@VV{}V@ VV{}V \\
X_1@>{}>> X@>{}>> X_2\\
@. @VV{}V@.\\
@. E@. 
\end{CD}
$$
Applying  Proposition 1.1.11 of \cite{bbd} we deduce the existence of a commutative diagram 
$$\begin{CD}
X_1@>{}>> X@>{}>> X_2\\
@VV{}V@VV{}V@ VV{}V \\
E_1@>{}>> E@>{}>> E_2 \\
@VV{}V@VV{}V@ VV{}V \\
Y_1@>{}>> Y@>{}>> Y_2\end{CD}
$$
whose rows and columns are distinguished triangles.
 Since the classes $\cu_{w\ge 0}$ and $\cu_{w\le 0}$ are extension-closed,
the triangle $E_1 \to E \to E_2 \to E_1[1]$ yields a weight decomposition (resp., a weak weight decomposition) of $E$.
\end{proof}

\begin{coro}\label{cextww}
1.
Let $\du$ be the triangulated subcategory of $\cu$ 
 strongly generated by some $D\subset \obj \cu$.
Then any object of $\du$ possesses a  weight decomposition  (resp.,  a weak weight decomposition) in $\du$ whenever any object of $\bigcup\limits_{i\in \mathbb{Z}} D[i]$ does. 

2. Conversely, if $\du$ is a subcategory of $\cu^b$ 
such that any  object of $\du$ possesses a weight decomposition  (resp., a weak weight decomposition) inside $\du$
then $\du$ is strongly generated by some set of objects of length $0$ (resp., of length $1$).
\end{coro}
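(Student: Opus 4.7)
The plan is to prove (1) by an extension-closure argument based on Proposition \ref{extww}, and (2) by induction on length.

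For (1), let $E \subset \obj\cu$ denote the class of objects admitting a (weak) weight decomposition whose two terms both lie in $\obj\du$. The hypothesis yields $\bigcup_{i\in\z} D[i] \subset E$, and Proposition \ref{extww} together with the extension-closedness of $\du$ (which, being triangulated, is closed under extensions) shows that for $X, Y \in E$ and $Z \in X \star Y$ the induced decomposition of $Z$ has its two parts in $X_1 \star Y_1 \subset \obj\du$ and $X_2 \star Y_2 \subset \obj\du$, so $Z \in E$; thus $E$ is extension-closed. I would then verify that $\langle D\rangle$ coincides with the extension-closure of the shift-stable class $\bigcup_i D[i]$: the extension-closure of a shift-stable class is again shift-stable (shifting commutes with the iterative construction of extensions), hence a triangulated subcategory, and clearly the smallest such containing $D$. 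This yields $E \supset \langle D\rangle = \obj\du$, which is the content of (1).

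For (2), I would induct on the length of $X \in \du \subset \cu^b$. Write $X \in \cu_{[a, a+n]}$; the base case is $n = 0$ in the strict weight-decomposition case (and $n \le 1$ in the weak case), where $X$ itself lies among the proposed generators. For $n \ge 1$ (resp.\ $n \ge 2$) I would apply the hypothesis to the shift $X[-a]\in \du$ (resp.\ $X[-(a+1)]\in\du$), obtaining a (weak) weight decomposition of this shift inside $\du$. Shifting back produces a triangle $X_1 \to X \to X_2 \to X_1[1]$ inside $\du$ with $X_1 \in \cu_{w\le a}$ and $X_2 \in \cu_{w\ge a+1}$ in the strict case (resp.\ $X_1 \in \cu_{w\le a+1}$ and $X_2 \in \cu_{w\ge a+1}$ in the weak case). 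Rotating the triangle and using that $\cu_{w\le m}$ and $\cu_{w\ge m}$ are extension-closed, I would refine these to $X_1 \in \cu_{[a,a]}$ (resp.\ $\cu_{[a, a+1]}$) and $X_2 \in \cu_{[a+1, a+n]}$, so that $X_1$ has length $0$ (resp.\ $\le 1$) and $X_2$ has length $n-1$. The inductive hypothesis applied to $X_2$ then finishes the proof, taking the generating set to be all length-$0$ (resp.\ length-$\le 1$) objects of $\du$.

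I expect the main obstacle to be the length bookkeeping in (2): the hypothesis only provides decompositions at level $0$, so one must judiciously choose the shift of $X$ so that the resulting triangle genuinely reduces the length. In addition, one must carefully rotate the triangle in both directions to extract the sharper weight bounds on $X_1$ and $X_2$, combining the a priori bounds given by the (weak) weight decomposition with those inherited from the known support of $X$. The weak case is the more delicate one, since the single-weight overlap of the two parts must be placed at the correct boundary of $[a, a+n]$ to guarantee a strict decrease in the length of $X_2$.
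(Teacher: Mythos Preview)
Your proposal is correct and follows essentially the same approach as the paper: part (1) via Proposition \ref{extww} and the observation that $\langle D\rangle$ is the extension-closure of $\bigcup_i D[i]$, and part (2) by induction on length using a suitably shifted (weak) weight decomposition to peel off a short piece. Your treatment is in fact more explicit than the paper's on the bookkeeping (the paper normalizes to $a=0$ and silently uses the rotation/extension-closedness argument you spell out), but there is no substantive difference in strategy.
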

\begin{proof}
1.
Suppose $X \in \obj\cu$.
Using induction we can assume that $X$ is an extension of objects of $\bigcup\limits_{i\in \mathbb{Z}} D[i]$; hence it is an extension of objects having weight decompositions (resp., weak weight decompositions).
Then Proposition \ref{extww} yields the result.

2.
It suffices to prove that any $X \in (\obj \du)\cap \cu_{[0,n]}$ belongs to $\lan (\obj \du)\cap\cu_{[0,0]}\ra$ (resp., to $\lan (\obj \du)\cap\cu_{[0,1]}\ra$). 
We will prove the statement using induction on $n$.

The base of induction is 
 the following obvious observation: $X$ belongs to $(\obj \du) \cap \cu_{[0,0]}$ (resp., $X\in (\obj\du)\cap \cu_{[0,1]}$) then $X$ also belongs to $\obj\lan (\obj \du) \cap \cu_{[0,0]}\ra$ (resp., $\obj\lan (\obj\du)\cap \cu_{[0,1]} \ra$).

Now we will describe the inductive step.
Suppose that  any $Y \in (\obj \du)\cap \cu_{[0,n-1]}$ belongs to $\lan (\obj \du)\cap\cu_{[0,0]}\ra$.
Let $X \in (\obj \du)\cap \cu_{[0,n]}$.
Consider a weight decomposition (resp., a weak weight decomposition) of $X$, i.e., a distinguished triangle
$$X_1 \to X \to X_2 \to X_1[1]$$
such that $X_2 \in \cu_{w\ge 1}$ and $X_1 \in \cu_{w \le 0}$ (resp., $X_1\in \cu_{w\le 1}$). 
Since $X_1 \in\obj \lan (\obj \du)\cap\cu_{[0,0]}\ra$ (resp., $X_1 \in\obj \lan (\obj \du)\cap\cu_{[0,1]}\ra$) and
$X_2[-1]$ satisfy the inductive assumption, we obtain that $X \in \obj \lan (\obj \du)\cap\cu_{[0,1]}\ra$.
\end{proof}

 Let R be a commutative ring, and $\cu$ be an $R$-linear triangulated category.  For any set 
  $\Sigma \subset R$ we define $S$ to be the class of endomorphisms of objects of $\cu$ corresponding to 
	multiplications by the elements of $\Sigma$. 
Denote by $K$ the Karoubi-closure of the triangulated subcategory of $\cu$ strongly generated by cones of elements of $S$. 

The following two lemmas treat this simple setting.

\begin{lem}\label{globalexpon}
For any $Y \in \obj K$ there exists an element $\lambda$ which is a product of elements of $\Sigma$ such that $\lambda\id_Y = 0$. Thus we may choose 
 $\lambda \in \Sigma$  if $\Sigma$ is a multiplicative set.  
\end{lem}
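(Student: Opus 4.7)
My plan is to prove the claim by induction on how $Y$ is built out of cones of morphisms in $S$. I will consider the class $\mathcal{T}\subseteq\obj K$ of those objects $Y$ for which some product $\lambda$ of elements of $\Sigma$ annihilates $\id_Y$, and show that $\mathcal{T}$ contains every cone of a morphism in $S$ and is closed under shifts, extensions, and retracts; since $K$ is the Karoubi-closure of the triangulated subcategory generated by cones of morphisms in $S$, this will suffice.

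The routine closure properties will be handled as follows. Closure under shifts is trivial. For a retract expressed via $Y\stackrel{j}{\to} Y'\stackrel{p}{\to} Y$ with $p\circ j=\id_Y$ and $\lambda\id_{Y'}=0$, I would use $\lambda\id_Y=p\circ\lambda\id_{Y'}\circ j=0$. For an extension $Y_1\stackrel{u}{\to} Y\stackrel{q}{\to} Y_2\to Y_1[1]$ with $\lambda_k\id_{Y_k}=0$ for $k=1,2$, I would observe that $q\circ\lambda_2\id_Y=\lambda_2\id_{Y_2}\circ q=0$, so $\lambda_2\id_Y$ factors as $u\circ h$ for some $h\in\cu(Y,Y_1)$; multiplying by $\lambda_1$ then gives $(\lambda_1\lambda_2)\id_Y=u\circ\lambda_1\id_{Y_1}\circ h=0$.

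The main obstacle is the base case: for $Y=\co(\sigma\id_X)$ with $\sigma\in\Sigma$ and $X\in\obj\cu$, I plan to show that $\sigma^2\id_Y=0$. Starting from the distinguished triangle $X\stackrel{\sigma\id_X}{\to} X\stackrel{f}{\to} Y\stackrel{g}{\to} X[1]$ and rotating it, the morphisms $g$ and $-\sigma\id_{X[1]}$ become consecutive in a distinguished triangle, so their composition vanishes and $\sigma g=0$. Applying $\cu(Y,-)$ to the original triangle yields a long exact sequence in which $g_*(\sigma\id_Y)=\sigma g=0$; therefore $\sigma\id_Y=f\circ h$ for some $h\in\cu(Y,X)$. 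Using $R$-linearity,
\[ \sigma^2\id_Y=\sigma\cdot(f\circ h)=(f\circ\sigma\id_X)\circ h=0, \]
since $f\circ\sigma\id_X=0$ as consecutive morphisms of the original triangle. The final assertion of the lemma is then immediate: when $\Sigma$ is multiplicatively-closed, any product of elements of $\Sigma$ already lies in $\Sigma$.
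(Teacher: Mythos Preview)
Your proof is correct and follows essentially the same approach as the paper: the key extension step (factoring $\lambda_2\id_Y$ through $Y_1$ and then killing it with $\lambda_1$) is identical. You are in fact more careful than the paper, which only writes out the extension step and leaves the base case, shifts, and retracts implicit; your explicit verification that $\sigma^2\id_Y=0$ for $Y=\co(\sigma\id_X)$ fills a gap the paper glosses over.
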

\begin{proof}
It suffices to prove the following. 
Let $\lambda_1\id_{Y_1} = 0$ and $\lambda_1\id_{Y_2} = 0$ for some $Y_1,Y_2 \in \obj \cu$.
Then for any distinguished triangle $Y_1 \stackrel{p}\to Y \stackrel{q}\to Y_2 \to Y_1[1]$ the equality $\lambda_1 \lambda_2\id_Y =0$ holds.
Indeed, $q\lambda_2\id_Y = \lambda_2\id_{Y_2} q = 0$, so $\lambda_2\id_Y$ factors through $Y_1$, i.e., $\lambda_2\id_Y = q \circ t$ for some $t \in \cu(Y,Y_1)$. Multiplying the both parts of equality by $\lambda_1$ we obtain the result.
\end{proof}

\begin{lem}\label{endomloc}
For any  $U,X\in \obj \cu$ the following 
conditions are equivalent.

1. There exists a distinguished triangle $U \to X \stackrel{s}\to Y \stackrel{d}\to U[1]$ in $\cu$  such that $Y$ belongs to $K$.

2. There exists a distinguished triangle $Y' \to X \to U \to Y'[1]$ in $\cu$  such that $Y'$ belongs to $K$.
\end{lem}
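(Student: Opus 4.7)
The plan is to prove both implications by one mechanism. I will use Lemma \ref{globalexpon} to produce a scalar $\lambda$ (a product of elements of $\Sigma$) which kills either $Y$ or $Y'$, factor $\lambda \id_X$ through one of the maps of the given triangle, and then apply the octahedron axiom to control the new third vertex. The symmetry of conditions (1) and (2) — both say that $X$ is an extension involving $U$ and an object of $K$, just with the roles of the two ends interchanged — means the two directions will be formally dual.

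Before starting the main argument I record an auxiliary fact: for any $M \in \obj \cu$ and any $\lambda \in R$ which is a product of elements of $\Sigma$, the cone of $\lambda \id_M$ lies in $K$. This follows by induction on the length of the product. The base case is the definition of $K$; for $\lambda = \lambda' \sigma$ the octahedron applied to the factorization $\lambda \id_M = \lambda' \id_M \circ \sigma \id_M$ fits $\co(\lambda \id_M)$ into a distinguished triangle whose outer terms are $\co(\sigma \id_M)$ and $\co(\lambda' \id_M)$, both of which lie in $K$ by induction, and $K$ is triangulated.

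For $(1) \Rightarrow (2)$, fix a triangle $U \stackrel{\iota}\to X \stackrel{s}\to Y \stackrel{d}\to U[1]$ with $Y \in \obj K$ and choose $\lambda$ with $\lambda \id_Y = 0$. Then $s \circ \lambda \id_X = \lambda \id_Y \circ s = 0$, so the long exact sequence obtained by applying $\cu(X,-)$ to the triangle exhibits $\lambda \id_X = \iota \circ g$ for some $g \in \cu(X,U)$. Completing $g$ to a triangle $Y' \to X \stackrel{g}\to U \to Y'[1]$ gives the candidate for (2), and the remaining task is to show $Y' \in \obj K$. Applying the octahedron axiom to the factorization $\lambda \id_X = \iota \circ g$ produces a distinguished triangle
\[
Y'[1] \longrightarrow \co(\lambda \id_X) \longrightarrow Y \longrightarrow Y'[2].
\]
The middle term lies in $K$ by the auxiliary fact, $Y$ lies in $K$ by hypothesis, and $K$ is a Karoubi-closed triangulated subcategory, so $Y'[1]$ and hence $Y'$ lie in $\obj K$.

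The direction $(2) \Rightarrow (1)$ proceeds entirely symmetrically: given $Y' \stackrel{p}\to X \stackrel{q}\to U \to Y'[1]$ with $Y' \in \obj K$, choose $\lambda$ killing $\id_{Y'}$, observe that $\lambda \id_X \circ p = p \circ \lambda \id_{Y'} = 0$ to factor $\lambda \id_X = g \circ q$ for some $g \in \cu(U,X)$, complete $g$ to a triangle $U \to X \to Y \to U[1]$, and apply the octahedron to this new factorization to exhibit $Y$ as an extension of $\co(\lambda \id_X)$ and a shift of $Y'$, both in $K$. I expect no real obstacle; the only delicate point is bookkeeping — tracking the direction of the connecting map in each octahedron diagram to identify the correct distinguished triangle. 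The conceptual content is the interplay between Lemma \ref{globalexpon}, which makes multiplication by $\lambda$ vanish on the "$K$-side" of the given triangle, and the octahedron axiom, which then forces the "new $K$-side" to be an extension of objects already known to lie in $K$.
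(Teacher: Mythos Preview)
Your argument is correct and uses the same two ingredients as the paper: Lemma \ref{globalexpon} to produce a scalar $\lambda$ annihilating the ``$K$-side'' of the given triangle, followed by an application of the octahedron axiom. The execution differs in which morphism you choose to factor. The paper observes that the connecting map $d\colon Y\to U[1]$ satisfies $\lambda d=0$, hence factors through the canonical map $U/\lambda\to U[1]$; the octahedron applied to this factorization $d=r\circ q$ produces the triangle $\co(q)[-1]\to X\to U\to\co(q)$ directly, with $\co(q)\in K$ because it is a cone of a map between two objects of $K$. You instead factor $\lambda\id_X$ through $\iota$, define $g\colon X\to U$ first, and then use the octahedron on $\iota\circ g$ to identify $\co(g)$ as an extension of $Y$ by $\co(\lambda\id_X)[-1]$. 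Both routes are equally short; yours has the mild advantage of making the auxiliary fact $\co(\lambda\id_M)\in K$ explicit (the paper simply asserts ``Certainly, $U/\lambda$ belongs to $K$''), and of carrying out $(2)\Rightarrow(1)$ by the same mechanism rather than appealing to categorical duality as the paper does.
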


\begin{proof}
It suffices to prove that condition 1 implies condition 2 since the converse implication is its dual.

Suppose $Y \in \du$.
By Lemma \ref{globalexpon} there exists a product $\lambda$ of elements of $\Sigma$ such that $\lambda \id_Y=0$. 

We note that $d$ factors through $U/\lambda$, where $U/\lambda$ denotes a cone 
 of 
 $\lambda \id_U$. Denote the corresponding morphism from $U/\lambda$ to $U[1]$ by $r$, and denote the morphism from $Y$ to $U/\lambda$ by $q$. 
Certainly, $U/\lambda$ belongs to $K$.
 
 The octahedron axiom applied to the commutative triangle $(q,r,r\circ q)$ (which we specify by its arrows) yields the existence of a distinguished triangle $\co(q)[-1] \to X \to U \to \co(q)$. Note that $\co(q)[-1]$ belongs to $K$ 
since it is a cone of a morphism between objects of $K$. 
So we obtain the result.
\end{proof}


\begin{coro}\label{centestcent}
Let $X,Y, Z$ be subclasses of $\cu$. 

Then the following statements are valid:

1. $X \star (\obj K) = (\obj K) \star X$.

2. $(X \star Y) \star Z = (X\star Y) \star Z$
\end{coro}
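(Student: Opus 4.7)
The plan is straightforward: part (1) should be an immediate corollary of Lemma \ref{endomloc}, while part (2) (which I read as the associativity identity $(X \star Y) \star Z = X \star (Y \star Z)$, the displayed formula apparently being a typo) is a standard consequence of the octahedron axiom and requires no assumption on $K$ at all.

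For part (1), I would fix $E \in X \star (\obj K)$, witnessed by a distinguished triangle $X_1 \to E \to Y \to X_1[1]$ with $X_1 \in X$ and $Y \in \obj K$. Applying Lemma \ref{endomloc} with $U := X_1$ and with the role of ``$X$'' in the lemma played by our $E$, condition~1 of the lemma is witnessed by this very triangle, so condition~2 supplies a distinguished triangle $Y' \to E \to X_1 \to Y'[1]$ with $Y' \in \obj K$. Thus $E \in (\obj K) \star X$, which gives $X \star (\obj K) \subset (\obj K) \star X$. The reverse inclusion follows by applying the implication $2 \Rightarrow 1$ in Lemma \ref{endomloc} to an arbitrary object of $(\obj K) \star X$.

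For part (2), given $E \in (X \star Y) \star Z$, there is a distinguished triangle $A \to E \to C \to A[1]$ with $A \in X \star Y$ and $C \in Z$, and further a triangle $B \to A \to D \to B[1]$ with $B \in X$, $D \in Y$. I would then apply the octahedron axiom to the composition $B \to A \to E$ to produce an object $F$ sitting in distinguished triangles
\[
B \to E \to F \to B[1] \qquad \text{and} \qquad D \to F \to C \to D[1].
\]
The second triangle exhibits $F \in Y \star Z$, and the first then gives $E \in X \star (Y \star Z)$. The opposite inclusion is entirely symmetric (apply the octahedron to $E \to C \to D[1]$ shifted appropriately, or work in $\cu^{op}$). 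The whole argument is routine; the only point requiring care is correctly matching variables when invoking Lemma \ref{endomloc} in part (1), since the letters $X$ and $Y$ there play different roles from the ones in the statement of Corollary \ref{centestcent}.
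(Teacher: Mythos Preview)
Your proposal is correct and matches the paper's approach: part~(1) is deduced directly from Lemma~\ref{endomloc} exactly as you describe, and for part~(2) the paper simply cites Lemma~1.3.10 of \cite{bbd}, whose proof is precisely the octahedron argument you spell out. You are also right that the displayed formula in part~(2) is a typo for the associativity $(X\star Y)\star Z = X\star(Y\star Z)$.
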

\begin{proof}
The first assertion is immediate from Lemma \ref{endomloc}. 
 The second assertion is given by Lemma 1.3.10 of \cite{bbd}. 
\end{proof}

Now assume $\cu$ is endowed with a weight structure $w$. Denote by $K'$ the 
 triangulated subcategory of $\cu$ strongly generated by the class $\langle \co(X \stackrel{\lambda}\to X) : X \in \obj \hw, \lambda \in \Sigma \rangle$. 

\begin{lem}\label{wwstor}
If $w$ is bounded and $\cu$ is small then $K$ coincides with $\kar_{\cu}(K')$.\footnote{The smallness of $\cu$ actually seems to be unnecessary. Yet this restriction (which ensures the existence of "all" Verdier localizations of $\cu$) 
 is quite satisfactory for our purposes.}
\end{lem}
\begin{proof}
Let $U$ be an object of $\cu$. We should show that $U/\lambda = \co(U \stackrel{\lambda}\to U)$ belongs to $\kar_{\cu}(K')$. 
The morphism $t(L(\lambda\id_U))$ is invertible in $K_w(\hw[\Sigma^{-1}]_{add})$, where $L$ denotes the localization functor $\cu \to \cu/K'$ and $t$ is the weight complex functor $\cu/K' \to K_w(\hw[\Sigma^{-1}]_{add})$ (see section 3 of \cite{bws} for the definition). Now the "conservativity" part of Theorem 3.3.1 of \cite{bws} implies that $L(U/\lambda)$ is isomorphic to zero in $\cu/K'$. Hence $U/\lambda$ belongs to the Karoubi-closure of $K'$.
\end{proof}

\subsection{The Ore conditions and  weight lifting properties}\label{bcas}

\begin{defi}\label{strore}
A class of morphisms $S \subset \mo(\au)$ (for a category $\au$) is said to satisfy the 
{\it left Ore condition}  if  for any couple of morphisms $g\in \au (X, Y)\cap S$ and $u \in \au(X, Z)$ 
(for some $X,Y,Z\in \obj \au$) can be completed to a commutative square
\begin{equation}\label{redf}
\begin{CD}
X    @>{g}>>  Y \\ 
@VV{u}V       @VV{f}V\\
Z    @>{s}>>   T
\end{CD}
\end{equation}
such that $s$ belongs to $S$ also.

The right Ore condition 
 is the categorical dual to the left Ore condition. 






Note that we do not include the usual "cancellation" property into our Ore conditions (so, one may say that we are only interested in one of the two Ore conditions).
\end{defi}

Till the end of the section we will always assume that 
 $w$ is a weight structure on a triangulated category $\cu$, $S \subset \mo(\hw)$ is a set of morphisms, $\du \subset \cu$ is the triangulated subcategory of $\cu$ strongly generated by the set $B$ of cones of   elements of $S$  (unless stated otherwise).
According to Proposition \ref{p2loc},  $w$ descends to the localization $\cu'=\cu/\du$; we will use the notation $w'$ for the corresponding descended weight structure.

\begin{lem}\label{loreg}
Suppose that $S$ satisfies the left Ore condition.
Then for any $s \in S, X \in \cu_{w\ge 0}$, and $u\in \cu(\co(s)[-1],X)$ there exists an element $X'$ of $\cu_{w\ge 0}$ along with a morphism $f\in \cu(X,X')$ such that $\co(f) \in \obj\du \cap \cu_{[0,1]}$ and the composition morphism $f\circ u$ from $\co(s)$ to $X'$ is zero.
\end{lem}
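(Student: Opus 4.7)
The plan is to factor $u$ through the vertex $A$ of the defining triangle of $\co(s)$ and then construct $f$ as a homotopy pushout.

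First I would factor $u$. Rotating the triangle $A\stackrel{s}{\to}B\to\co(s)\to A[1]$ gives the distinguished triangle $B[-1]\to\co(s)[-1]\stackrel{\partial}{\to}A\stackrel{s}{\to}B$. Applying $\cu(-,X)$ and using that $B[-1]\in\cu_{w\le -1}$ is orthogonal to $X\in\cu_{w\ge 0}$ (Proposition \ref{pbw}(\ref{iort})), the group $\cu(B[-1],X)$ vanishes, so the induced long exact sequence forces $u=u'\circ\partial$ for some $u'\in\cu(A,X)$.

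Next I would form the triangulated pushout of $s$ and $u'$: let $X'$ be the third vertex of the distinguished triangle
\[
A\stackrel{(s,\,-u')^{T}}{\longrightarrow}B\oplus X\stackrel{(h,\,f)}{\longrightarrow}X'\to A[1],
\]
with components $h\in\cu(B,X')$ and $f\in\cu(X,X')$ satisfying $h\circ s=f\circ u'$. Applying the octahedron axiom to the composition $X\hookrightarrow B\oplus X\to X'$ yields the standard pushout identity $\co(f)\cong\co(s)$, i.e.\ a distinguished triangle $X\stackrel{f}{\to}X'\to\co(s)\to X[1]$.

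From this I would read off the three required conclusions. Since $\co(s)$ belongs to $B\subset\obj\du$ and lies in $\cu_{[0,1]}$ (it is an extension of $A[1]\in\cu_{w=1}$ by $B\in\hw$), we obtain $\co(f)\in\obj\du\cap\cu_{[0,1]}$. The same triangle exhibits $X'$ as an extension of two objects of $\cu_{w\ge 0}$, so $X'\in\cu_{w\ge 0}$ by extension-closedness of this class. And $f\circ u=f\circ u'\circ\partial=h\circ s\circ\partial=0$, since $s\circ\partial$ is a composition of consecutive arrows of a distinguished triangle.

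The main, albeit mild, technical point is invoking the pushout identity $\co(f)\cong\co(s)$ from the octahedron; the rest is diagram chasing. I note that the left Ore condition on $S$ does not appear to be used in this lemma per se, and is presumably assumed here because it is the standing hypothesis in the surrounding results building up to Theorem \ref{dirsum}.
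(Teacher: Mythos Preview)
Your proof is correct and takes a different route from the paper's. The paper first factors $u$ through a weight decomposition of $X$ (noting $w_{\le 0}X\in\hw$ since $X\in\cu_{w\ge 0}$), then invokes the left Ore condition in $\hw$ to produce a morphism $g:w_{\le 0}X\to Y$ with $Y\in\hw$ and $\co(g)\in\obj\du$ that kills the relevant composite; an octahedron diagram then assembles $X'$ from $Y$ and $w_{\ge 1}X$. You instead form the homotopy pushout of $s$ along $u'$ directly, and your identifications $\co(f)\cong\co(s)$ (via the octahedron for $X\hookrightarrow B\oplus X\to X'$) and $f\circ u=h\circ s\circ\partial=0$ are both valid. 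Your closing observation is also correct: your argument never uses the left Ore condition, so the lemma in fact holds without that hypothesis. The paper's route genuinely consumes the Ore condition to stay inside $\hw$, whereas your approach is more elementary and shows the hypothesis is superfluous for this particular lemma.
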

\begin{proof}
Let $w_{\le 0}X \stackrel{i}\to X \to w_{\ge 1}X$ be a weight decomposition of $X$.
The morphism $u$ factors through $w_{\le 0}X$ by the orthogonality axiom for $w$. Denote the corresponding map from $\co(s)[-1]$ to $w_{\le 0}X$ by $u'$.
The left Ore condition yields that there exist $Y \in \cu_{w=0}$ and  $g\in \cu(w_{\ge 0}X,Y)$ such that  $g \circ u=0$ and $\co(g) \in \obj\du$. Denote by $v$ the corresponding morphism from $\co(g)[-1]$ to $w_{\le 0}X$.

Applying the octahedron axiom to the commutative triangle $(v,u, u\circ v)$ we obtain a morphism $g$ from $X$ to some $X'$ whose cone belongs to $ (\obj \du) \cap \cu_{[0,1]}$ as demonstrated by the 
following diagram:
\begin{equation}\label{oreg}
\begin{CD}
\co(g)[-1]    @>{}>>  \co(g)[-1]  \\ 
 @VV{v}V       @VV{}V            \\
w_{\le 0}X    @>{}>>  X @>{}>> w_{\ge 1}X  \\ 
@VV{g}V       @VV{}V      @VV{}V\\
Y    @>{}>>  X' @>{}>> w_{\ge 1}X\\
\end{CD}
\end{equation}
Certainly, the corresponding morphism from $\co(s)[-1]$ to $X'$ is zero since it factors through 
$g\circ u'=0$. 

\end{proof}

\begin{lem}\label{twprop}
Let $X$ be an object of $\cu$ and let $w_{\le -1}X \stackrel{i}\to X \to w_{\ge 0}X$ be a shifted weight decomposition of $X$.
Let $f$ be a morphism from $X$ to $X'$ such that $\co(f)= E \in (\obj \du) \cap \cu_{[0,n]}$ and that  $f\circ i$ factors through an object $T \in (\obj \du) \cap \cu_{[0,n]}$ for some $n \in \mathbb{N} \cup \{+\infty\}$.

Then there exists a morphism from $X'$ to some $X'' \in \cu_{w \ge 0}$ whose cone belongs to $(\obj \du) \cap \cu_{[-1,n-1]}$.
\end{lem}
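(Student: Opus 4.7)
The plan is first to reduce to the case $f\circ i=0$ via orthogonality, then construct $X''$ by a combination of the octahedron axiom with iterated applications of Lemma \ref{loreg}.

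First I would note that the hypothesis on $f\circ i$ actually forces $f\circ i=0$. Indeed, since $T\in \cu_{[0,n]}\subseteq \cu_{w\ge 0}$ while $w_{\le -1}X\in\cu_{w\le -1}$, the orthogonality axiom (Proposition \ref{pbw}(\ref{iort})) gives $\cu(w_{\le -1}X,T)=0$, so $u_1=0$ and therefore $f\circ i=u_2\circ u_1=0$. Consequently $f$ factors through the truncation as $f=f'\circ p$, where $p:X\to w_{\ge 0}X$ is the canonical morphism and $f':w_{\ge 0}X\to X'$.

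Next I would apply the octahedron axiom to the composition $E[-1]\stackrel{\delta[-1]}\to X\stackrel{p}\to w_{\ge 0}X$, where $\delta:E\to X[1]$ is the connecting morphism of the triangle $X\to X'\to E$. Setting $Y:=\co(p\circ\delta[-1])$, one obtains a triangle $w_{\ge 0}X\to Y\to E\to w_{\ge 0}X[1]$ exhibiting $Y$ as an extension of $E\in\cu_{w\ge 0}$ by $w_{\ge 0}X\in\cu_{w\ge 0}$, so $Y\in\cu_{w\ge 0}$; simultaneously, the fourth triangle reads $X'\to Y\to w_{\le -1}X[1]\to X'[1]$, and its connecting morphism equals $(f\circ i)[1]=0$, so it splits: $Y\cong X'\oplus w_{\le -1}X[1]$. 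Since $w_{\le -1}X[1]$ is not in $\du$ in general, this $Y$ is not yet the desired $X''$.

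To remedy this, I would exploit the left Ore condition. By Proposition \ref{p2loc}(1), $E$ admits a weak weight decomposition $E_1\to E\to E_2$ in $\du$ with $E_1\in B_{[0,0]}$ (an iterated extension of cones of $S$-morphisms) and $E_2\in B_{[1,n]}$. The component of the obstruction $w_{\le -1}X[1]\to E$ landing in $E_1$ can be killed by iterated applications of Lemma \ref{loreg}, once for each building-block cone of an $S$-morphism appearing in $E_1$: each application produces some $Z\in\cu_{w\ge 0}$ with a morphism $w_{\ge 0}X\to Z$ whose cone lies in $\du\cap \cu_{[0,1]}$, absorbing the corresponding piece of the obstruction into $\du$. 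Assembling these modifications with the octahedron of the previous step yields $X''$, fitting into a triangle $X'\to X''\to K\to X'[1]$ with $K\in\du\cap \cu_{[-1,n-1]}$ and $X''\in\cu_{w\ge 0}$.

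The main obstacle will be the iterative construction in the last step: one must carefully choose and compose the Lemma \ref{loreg} modifications so that each piece of the obstruction is absorbed into $\du$ with the correct weight control. This will require an induction on the extension-length of $E_1$ in $B_{[0,0]}$, together with careful tracking of how the cones of $S$-morphisms (which live in $\cu_{[0,1]}$) combine with $E_2\in\cu_{[1,n+1]}$ to yield a cone of $X'\to X''$ landing precisely in the claimed range $\cu_{[-1,n-1]}$.
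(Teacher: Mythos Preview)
Your first observation is correct and worth noting: since $T\in\cu_{[0,n]}\subset\cu_{w\ge 0}$ and $w_{\le -1}X\in\cu_{w\le -1}$, orthogonality forces the map $w_{\le -1}X\to T$ to vanish, hence $f\circ i=0$. But from here your argument goes in the wrong direction. Having concluded $f\circ i=0$, you discard $T$ entirely and try to rebuild the conclusion by invoking Lemma~\ref{loreg} iteratively. That lemma, however, assumes that $S$ satisfies the left Ore condition, and Lemma~\ref{twprop} carries no such hypothesis (the Ore condition enters only in Lemma~\ref{loreg} and Theorem~\ref{dirsum}). So your step~4 is not available, and the induction you sketch on the extension-length of $E_1$ cannot even be started in the generality required.

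The paper's argument avoids the Ore condition altogether: it is a single application of the $3\times 3$ lemma to the commutative square
\[
\begin{CD}
w_{\le -1}X @>>> T\\
@VVV @VVV\\
\bullet @>>> X'
\end{CD}
\]
(the paper writes $w_{\le -1}X'$ at the lower left; one may also read it as $X$), completed to a $3\times 3$ grid. The desired morphism is the composite $u_2\circ u_1:X'\to X''$ read off from the middle column and bottom row. The crucial point you are missing is that even though the map $w_{\le -1}X\to T$ is zero, placing $T$ (and not $0$) in the upper right corner is exactly what forces the cones $\co(u_1)\cong T[1]$ and $\co(u_2)\cong E[1]$ to lie in $\du$; hence so does the cone of $u_2\circ u_1$, by the octahedron. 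If you set $T=0$ (which is what your argument effectively does after step~1), the analogous cone would involve a shift of $w_{\ge 0}X$, which has no reason to lie in $\du$.

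In short: the factorization through $T$ is not used to extract information about the morphism $f\circ i$ (which is indeed zero); rather, $T$ is used as a \emph{vertex} in the $3\times 3$ diagram so that the resulting cones are visibly extensions of objects of $\du$. No Ore condition, and no induction, is needed.
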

\begin{proof}

Certainly, we have the following commutative square.

\begin{equation}\label{orreg}
\begin{CD}
w_{\le -1}X    @>{}>>  T  \\ 
 @VV{}V       @VV{}V            \\
w_{\le -1}X'    @>{}>>  X'  
\end{CD}
\end{equation}

By 
Proposition 1.1.11 of \cite{bbd}, it can be 
completed to the following diagram (whose rows and columns are distinguished triangles):
\begin{equation}\label{orrreg}
\begin{CD}
w_{\le -1}X    @>{}>>  T      @>{}>>  C_1\\ 
 @VV{}V       @VV{}V      @VV{}V            \\
w_{\le -1}X'    @>{}>>  X'      @>{}>>   w_{\ge 0}X'\\ 
 @VV{}V       @VV{u_1}V            @VV{}V      \\
E         @>{}>>  C_2      @>{u_2}>>    X''
\end{CD}
\end{equation}

The extension-closedness of $\cu_{[0,n]}$ yields that $C_1$ belongs to $\cu_{[0,n]}$; hence 
$X''$ belongs to $\cu_{w\ge 0}$.
Moreover, $T[1] = \co(u_1)$ belongs to $(\obj \du) \cap \cu_{[-1,n-1]}$, and
$E[1] = \co(u_2)$ also does.
So the composition $u_2\circ u_1$ 
satisfies the conditions desired.
\end{proof}

\begin{defi}\label{swlp}
Let $\du$ be a full triangulated subcategory of $\cu$; denote by $L$ the localization functor $\cu\to \cu'=\cu/\du$.
 
Then we will say that the triple $(\cu,w,\du)$ satisfies the  {\it left weight lifting property} if $w$ descends to a weight structure $w'$ on $\cu$ (see Remark \ref{rdesc})  and for any object $X$ of $\cu$ such that $F(X) \in \cu'_{w'\ge 0}$ 
there exists a distinguished triangle $$T \to X \to X_{\ge 0} \to T[1]$$
such that  $T\in \obj \du $ and $X_{\ge 0} \in \cu_{w\ge 0}$.


We will say that $F$ satisfies the {\it right weight lifting property} if the corresponding functor between the opposite triangulated categories satisfies the  left weight lifting property.

\end{defi}


The left weight lifting property can be slightly reformulated if there exist  weak weight decompositions inside $\du$.

\begin{lem}\label{lwp}

1. Assume that $(\cu,w,\du)$ satisfies the  left weight lifting property and that  for any object of $\du$ there exists its $w$-weak weight decomposition inside $\du$.
 Then for any $X\in \obj \cu$ such that $L(X)\in \cu'_{w'\le 0}$ there exists a distinguished triangle $T' \to X \to X'_{\ge 0} \to T[1]$
such that  $T'\in (\obj \du)\cap \cu_{w\le 0}$ and $X'_{\ge 0} \in \cu_{w\ge 0}$.

2. In particular, this statement is fulfilled whenever $(\cu,w,\du)$ satisfies the  left weight lifting property and $\du$ is strongly generated by 
some elements of $\cu_{[0,1]}$.

\end{lem}
\begin{proof}
1. The  left weight lifting property gives the existence of a distinguished triangle $T \to X \to X_{\ge 0} \to T[1]$ with $T\in \obj \du $ and 
 $X_{\ge 0} \in \cu_{w\ge 0}$. Now consider a weak weight decomposition $T_1 \to T \to T_2 \to T_1[1]$ of $T$ inside $\du$ (recall that $T_1\in \obj \du\cap \cu_{w\le 0}$ and $T_2\in \obj \du\cap \cu_{w\ge 0}$).
These triangles yield that $X\in \{T\}\star  \cu_{w\ge 0}$ and $T\in (\obj \du\cap \cu_{w\le 0})\star \cu_{w\ge 0}$. Thus $X\in ( (\obj \du\cap \cu_{w\le 0})\star \cu_{w\ge 0})\star  \cu_{w\ge 0}$. The latter class equals  $(\obj \du\cap \cu_{w\le 0})\star (\cu_{w\ge 0}\star  \cu_{w\ge 0})$ according to Corollary \ref{centestcent}(2). Since $\cu_{w\ge 0}\star  \cu_{w\ge 0}=\cu_{w\ge 0}$ (see Proposition \ref{pbw}(\ref{iort})), we obtain the result.



2. Recall that objects of $\du$ possess  $w$-weak weight decompositions inside $\du$ whenever  $\du$ is strongly generated by some elements of $\cu_{[0,1]}$ (see  Remark \ref{rwwd}).

\end{proof}  

\subsection{The main weight lifting results}\label{slwp}

\begin{theo}\label{dirsum}
Suppose that 
 a set of morphisms $S$ of the category $\hw$ satisfies the left Ore condition; take $\du=\lan \co(S)\ra$.
Then $(\cu,w,\du)$ satisfies the  left weight lifting property.

\end{theo}
\begin{proof}
Let $X$ be an object of $\cu$ such that $L(X) \in (\cu')_{w'\ge 0}$. It suffices to prove that there exists an element  $X' \in \cu_{w\ge 0}$ along with a morphism $f \in \cu(X,X')$ such that $\co(f)[-1] \in (\obj \du) \cap \cu_{w\le 0}$. 

Assume first that 
 $X$ is bounded below. Then we'll use induction on the minimal $n\in \z$ such that $X$ belongs to $\cu_{w\ge -n}$.
Fix  a (shifted) weight decomposition of $X$: $w_{\le-1}X\stackrel{i}\to X \stackrel{j}\to w_{\ge 0}X \to w_{\le-1}X[1]$.

Suppose  $X\in \cu_{w\ge -1}$. 
The orthogonality axiom for the weight structure  $w'$ yields $L(i) = 0$ (since $L(X) \in \cu'_{w'\ge 0}$).
By 
Lemma 2.1.26 of \cite{neebook}, there exists a factorization of $i$ through some object $M$ of $\du$, i.e., there
exist $g \in \cu(w_{\le -1}X,M)$ and $f \in \cu(M, X)$ such that $f\circ g = i$.
According to Proposition 2.1.3(3) of \cite{bososn} we can assume that $M \in (\obj \du) \cap \cu_{[-1,1]}$.

Let $M_1 \stackrel{s_1}\to M \stackrel{s_2}\to M_2$ be a weak weight decomposition of $M$ in $\du$ 
(see Remark \ref{rwwd}). 
By Lemma \ref{loreg}, there exists a morphism $r\in \cu(X,X')$ such that $r \circ f \circ s_1 = 0$ and $\co(r) \in (\obj \du)\cap \cu_{[-1,0]}$.
Hence $r \circ f$ factors through $M_2$. 

By Lemma \ref{twprop}, we have a morphism $u$ from $X'$ to $X''$ such that $X'' \in \cu_{w \ge 0}$ and $\co(u) \in (\obj   \du)\cap \cu_{[-1,0]}$.
Thus the composition $u \circ r$ yields the morphism desired from $X$ to an object whose cone belongs to $(\obj  \du)\cap \cu_{[-1,0]}$.

Now we describe the inductive step. 
If $X \in \cu_{w\ge -n}$ then one can apply the inductive assumption to $X[n-1]$. It yields a distinguished
triangle of the form $M \to X \stackrel{p}\to X'\to M[1]$, where $M \in (\obj \du) \cap \cu_{w\le -(n-1)}$ and $X' \in \cu_{w\ge -(n-1)}$. By the inductive assumption, there also exists a distinguished triangle $M' \to X' \stackrel{q}\to X'' \to
M'[1]$, where $M' \in (\obj \du) \cap \cu_{w\le 0}$ and $X'' \in \cu_{w\ge 0}$. Applying the octahedron axiom to the commutative triangle of morphisms
$(p,q, q\circ p)$ we obtain a triangle of the type desired for $X$.

It remains to 
 prove that any 
 $X$ satisfying $L(X) \in (\cu')_{w'\ge 0}$ is actually necessarily bounded below.  All objects of $\du$ are $w$-bounded below (since $\du$ 
 is strongly generated by objects satisfying this property).  
The morphism $w_{\le -1} X \stackrel{i}\to X$ becomes zero in $\cu'$ and hence factors through an object 
 $T \in (\obj \du) \cap \cu_{w\ge N}$ for some  $N\in \z$. 
Hence the morphism  $w_{\le N-1} X \stackrel{i_n}\to X$ factors through $T$ also. 
 Since $w_{\le N-1}X\perp T\in \cu_{w\ge N}$, $i_n$ is zero;  thus $X \in \cu_{w\ge N}$.


\end{proof}

\begin{rema}\label{restrcase}
1. Now we describe an important particular case of our theorem.

Suppose that  $S$ consists of morphisms of the form $A \to 0$ for 
$A$ running through some set $D\subset \cu_{w=0}$. Then $w$ restricts to $\du=\lan D\ra$  (in the sense of Definition \ref{dwso}\ref{iVII}; the corresponding weight structure will be denoted by $w_{\du}$) immediately from Corollary \ref{cextww}, and $S$  certainly satisfies the left Ore condition. 

Applying Theorem \ref{dirsum} (and Lemma \ref{lwp}(2)) to this setting we obtain the following: for any object $X \in \obj \cu$ such that $L(X) \in \cu'_{w'\ge 0}$  there exist $X' \in \cu_{w\ge 0}$ and  $g\in \cu(X,X')$ such that $\co(g)[-1] \in (\obj\du) \cap \cu_{w\le 0} = \du_{w_{\du}\le 0}$.
Denote the corresponding morphism from $\co(g)[-1]\to X$ by $j$.
Note that $\co(g)[-1]$ admits a shifted weight decomposition $C_1 \stackrel{i}\to \co(g)[-1] \to C_2$ such that $C_1 \in \du_{w_{\du}\le -1}$, $C_2 \in \du_{w_{\du}\ge 0}$. Applying the octahedron axiom to the commutative triangle $(i,j, j\circ i)$ 
 we obtain that $X$ admits a shifted $\cu$-weight decomposition $X_1 \to X \to X_2$ such that $X_1$ 
belongs to $\du_{w_{\du}\le -1}$ and $X_2$ belongs to $\cu_{w\ge 0}$.

2. It appears that localizing $\cu$ by $\du=\lan \co(S)\ra$ is only 
"relevant"  in the  case of a bounded $w$. In the case where $\cu$ is closed under coproducts one usually wants to localize it by subcategories that are closed with respect to coproducts also (cf. \S4.3.1 of \cite{bososn} or \S3 of \cite{bsnew}), whereas the localization by any non-zero $\du=\lan \co(S)\ra$ yields a degenerate weight structure.

\end{rema}


We easily deduce the following statement.

\begin{pr}\label{heartlift}
1. Suppose 
that $\du$ is strongly generated by a class $D$ of objects of $\hw$.
Then $w$ descends to $\cu'=\cu/\du$, and $\cu'_{w'=0}$ 
equals the essential image $L(\cu_{w=0})$.

2. Suppose that $\du$ is strongly generated by $\co(S)$, where $S$ is a class of morphisms between objects of $\hw$ satisfying  both the left and the right Ore conditions.
Then 
 $\cu'_{w'=0}$  
lies in (the essential image) $L(\cu_{[-1,0]}) $. 
\end{pr}
\begin{proof}
1. Let $S$ be the set $\{0\to A:\ A\in D \}$. 
The category $\du$ is strongly generated by cones of elements of $S$ (cf. Remark \ref{restrcase}(1)). Hence $w$ descends to $\cu'$; thus $L(\cu_{w=0})$ lies in $\cu'_{w'=0}$.

Now we verify the converse inclusion. 
According to the dual to Theorem \ref{dirsum}, any $Y \in \cu'_{w'=0}$ can be lifted to an object $X$ of $\cu_{w\le 0}$. 
According to Remark \ref{restrcase}(1), there exists a shifted weight decomposition $w_{\le -1} X \to X \stackrel{j}\to w_{\ge 0} X$ such that $w_{\le -1} X$ belongs to $\obj\du$. 
Hence $L(w_{\ge 0} X)\cong Y$. 
Since $w_{\ge 0} X \in \cu_{w=0}$, we obtain the result.

2. 
Applying the dual to Theorem \ref{dirsum} we obtain that
any $Y \in \cu'_{w'=0}$ can be lifted to an object $X$ of $\cu_{w\le 0}$.
Applying Theorem \ref{dirsum} to $X[1]$ we obtain a distinguished triangle $
 X \to X_{\ge -1} \to T[1]\to X[1]$ 
such that  $X_{\ge -1}\in \cu_{w\ge -1}$ and $T\in \obj \du$. 
 Moreover, Lemma \ref{lwp}(2) allows us to assume that $T\in \obj \du\cap \cu_{w\le -1}$. 
Hence $X_{\ge -1}$ is an extension of objects of $\cu_{w\le 0}$; thus we have $X_{\ge -1} \in \cu_{[-1,0]}$.
By construction $L(X_{\ge -1}) \cong L(X) \cong Y$ 
and we obtain the result.
\end{proof}

\begin{rema}
Certainly, Proposition \ref{heartlift}(1)  implies Theorem 1.7(b) of \cite{wildint}. 
Moreover, 
 this result easily extends to motives with arbitrary coefficients. 
Note that we do not use any extra properties of our localization setting in this proof (such as the existence of an adjoint functor to the localization one).

\end{rema}

\section{Applications}\label{sappl}

In \S\ref{spectate} we give  concrete examples 
for Theorem \ref{dirsum}; we study  the stable homotopy category and the (triangulated) category of Tate motives. 

In \S\ref{apppl} we compare the "refined" weights of  $X\in \obj \cu$ in a system of localizations $\cu/\du_i$ with 
 its weights in the categories $(\cu/\du_i) \otimes \z[J^{-1}]$ (where $\du_i$ are some triangulated subcategories to which $w$ restricts) for some set of integers $J$. 
In Proposition \ref{torsindep} we show that there exists  a distinguished triangle $T\to X \to X' \to T[1]$, where $T$ is a torsion object and the weights of $X'$ in all $\cu/\du_i$ are almost the   same as the weights of $X$ in $(\cu/\du_i)\otimes \z[J^{-1}]$. 

\subsection{On finite spectra and Tate motives}\label{spectate}

Now we are able to prove the results stated in the beginning of the introduction.

 
\begin{rema}\label{tappl}
1. Let $SH^{fin}$ be the topological stable homotopy category of finite spectra. As 
shown in \S4.6 of \cite{bws}, there exists a certain {\it spherical} weight structure on this category 
 such that $SH^{fin}_{w\ge n+1}$ consists of finite spectra $X$ such that the stable homotopy groups $\pi^s_i(X)$ are zero for $i\le n$.

Let $S$ be the class of multiplications by non-zero integers 
on objects of $\hw$ (that consists of finite coproducts of sphere spectra). Consider the localization functor $SH^{fin} \stackrel{L}\to SH^{fin}/\langle\co(S)\rangle$. 
The category $SH^{fin}/\langle\co(S)\rangle$ is isomorphic to $SH^{fin} \otimes \q$ (see  Appendix A.2 of  \cite{kellymotcharp} or Proposition 5.6.2 of \cite{bpure}; note  also that  $SH^{fin} \otimes \q$ is equivalent to $K^b(\q-\operatorname{vect})$, where $\q-\operatorname{vect}$ is the category of finite-dimensional $\q$-vector spaces); so that for $Y\in \obj SH^{fin}$ we will denote $L(Y)$ by $Y_{\q}$.
Denote by $w_{\q}$ the weight structure on the category $SH^{fin} \otimes \q$ 
 that descends from the weight structure $w$ on $SH^{fin}$.  Certainly, $Y_{\q}$ belongs to 
$(SH^{fin} \otimes \q)_{w_{\q}\le -n-1}$ if and only if the groups $SH^{fin} \otimes \q (\mathbb{S}_{\q}^i, Y_{\q}) = SH^{fin}(\mathbb{S}^i, Y) \otimes \q \cong \pi^s_{i}(Y)\otimes \q$ vanish for $i\le n$. 

Let $X$ be an object of $SH^{fin}$ such that the stable homotopy groups $\pi^s_{i}(X)$ are torsion for all $i \le n$. This means $X_{\q}$ belongs to 
$SH^{fin} \otimes \q_{w_{\q}\le -n -1}$. Applying the dual of Theorem \ref{dirsum} to this setting we obtain the following: there exists a distinguished  
triangle $\tilde{X} \to X \to T \to \tilde{X}[1]$ in $SH^{fin}$ such that all the homotopy groups of $T$ are torsion and $\tilde{X}$ is an $n$-connected spectrum. 

Note that the natural candidate for $\tilde{X}$, the $n$-connected cover, usually does not belong to $SH^{fin}$ (for example, a $1$-connected cover of $\mathbb{S}/2$ is an extension of $\mathbb{S}/2$ by a shift of the Eilenberg-Maclane space, $H\z/2$). 

2. Let $F$ be a field
, $R$ be either $\z$ or $\q$. 
Consider the category  $DMT^{eff}_{gm}(F;R)$ of  effective geometric $R$-linear Tate motives over  $F$.

Recall that there is a weight structure $w$ on this category whose heart is the 
 additive hull of $\{R(k)[2k]\}_{k \in \mathbb{N}}$ (see Theorem 2.2.1 of \cite{bzp}; see also 
\cite{canc} or  \cite[Theorem 2.1.2]{bokum} to avoid $\z[1/p]$-coefficients in the case $F$ is of positive characteristic $p$). 
Certainly it restricts to the full triangulated subcategory of $1$-effective Tate motives. It follows that the weight structure descends 
to the category $DMT_{gm}^0(F) = DMT^{eff}_{gm}(F;R)/DMT^{eff}_{gm}(F;R)(1)$. 
Since the subcategory $DMT^{eff}_{gm}(F;R)(1)$ is orthogonal to $R$, it is also orthogonal to $R/m = \co(R \stackrel{m}\to R)$. By Lemma 9.1.5 of \cite{neebook}, 
$H^n_M(M,R/m)= DMT^{eff}_{gm}(F;R)(M,R/m[n]) \cong DMT_{gm}^0(F;R)(L(M), L(R/m)[n])$. Note that $L(R)$ generates the heart of $w^0$. Moreover, $DMT_{gm}^0(F;R)(L(R), L(R)[n]) \cong 0$ whenever $n$ is not zero. Thus, the category $DMT_{gm}^0(F;R)$ is equivalent to the category $K^b(\operatorname{Free}(R))$ (where $\operatorname{Free}(R)$ is the category of finite dimensional free modules over $R$).

Now let $M$ be an object of $DMT^{eff}_{gm}(F;R)$. Assume that the motivic cohomology groups $H^i_M(M,\z/m)$ are zero for $i\ge -n, m\in \z$. This can be rewritten as  $K^b(\operatorname{Free}(R))(L(M),L(R/m)[i]) \cong DMT_{gm}^0(F,R)(L(M), L(R/m)[i]) \cong 0$ for $i \ge -n, m\in \z$, i.e., $L(M)$ viewed as a complex (i.e., as an object of $K^b(\operatorname{Free}(R))$) is concentrated in degrees less than or equal to $-(n+1)$. Thus $L(M)$ belongs to $DMT_{gm}^0(F;R)_{w^0\ge n+1}$.

Applying Theorem \ref{dirsum} we obtain the existence of a distinguished triangle 
$E \to M \to M' \to E[1]$ such that $E$ is an effective motive and $M'$ is an element of $DMT^{eff}_{gm}(F;R)_{w\ge n+1}$.

Note that we can consider vanishing of the groups $H^n_M(-,\q)$ (resp., of $H^n_M(-,\q/\z)$) instead of vanishing of $H^n_M(-,R/m)$ for all $m\in \z$ if $R = \q$ (resp., $R= \z$).
\end{rema}


\subsection{On chains of localizations}\label{apppl}

Using Theorem \ref{dirsum} (and its particular case described in Remark \ref{restrcase}) we make the following observation. 
It 
is applied in  \cite{bosocwh}  to the study of so-called Chow-weight homology theories and related questions.

We assume that $w$ is a bounded weight structure on a small triangulated category $\cu$. Consider a system of full triangulated subcategories $\du_i$ of $\cu$ (for $i\ge 0$; we allow $i$ to be equal to $+\infty$) such that $w$ restricts to $\du_i$, $\du_{i+1} \subset \du_i$, $\du_0 = \cu$ and $\du_{+\infty} = \{0\}$. 
Let $J\subset \z\setminus \ns$; 
let $S$ be the class of morphisms $X \stackrel{p\id_X}\to X$, where $p\in J$, $X\in \obj \hw$. Denote by $K$ the minimal Karoubi-closed full triangulated subcategory of $\cu$ containing $\co(S)$. Denote by $\du'_i$ the category $K \star \du_i$; note that it is  triangulated by Lemma \ref{endomloc} (combined with Lemma \ref{wwstor}).

Let $a_i$ be a non-decreasing non-negative integer sequence (we allow $a_i$ to be equal to $+\infty$). 
Denote by $L^i$ (resp., by $L^{'i}$) the corresponding localization functor from $\cu$ to $\cu/\du_i$ (resp., to $\cu/\du'_i$).

\begin{pr}\label{torsindep}
Let $X$ be an object of $\cu$ such that $L^{'a_i}(X) \in (\cu/\du'_{a_i})_{w\ge -i+1}$ for all $i \in \mathbb{Z}$. 

1. Assume $a_l =0$ for some $l$. 
Then for any integer $N$ there exists a distinguished triangle $T\to X\to M \to T[1]$ such that $T \in \obj K$ and  $M$ is an extension of  an element $M'\in \cu_{w\ge -N+1}$ such that  $L^{a_i}(M') \in (\cu/\du_{a_i})_{w\ge -i+1}$  for any $i\in \z$ by an element of $(\obj \du_{a_N})\cap \cu_{w\le -N}$. 

2. For any integers $N$ and $N'$ there exists a distinguished triangle $T\to X\to M \to T[1]$ such that $T \in \obj K$ and  $M$ belongs to the class $((\obj \du_{a_N})\cap \cu_{w\le -N}) \star \{M'\} \star \cu_{w\ge -N'}$,  where $M'$ is an element of $\cu_{w\ge -N+1}$ such that  $L^{a_i}(M') \in (\cu/\du_{a_i})_{w\ge -i+1}$ for all $i\in \z$. 
\end{pr}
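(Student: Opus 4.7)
The plan is to apply Theorem \ref{dirsum} to the localization $L'^{a_N}\colon \cu\to \cu/\du'_{a_N}$ in order to peel off a torsion-plus-$\du_{a_N}$ piece from $X$, and then to split this piece into its $K$-torsion part (serving as $T$) and a weight-refined $\du_{a_N}$-part. First I would verify that the class $S_N := S \cup \{A\to 0 : A\in \hw_{\du_{a_N}}\}$ satisfies the left Ore condition in $\hw$ (for $s=p\,\id_X$, the square with $u=f,\, g=p\,\id_Y$ commutes; for $s\colon A\to 0$, the target $0\in\hw$ works trivially) and that $\du'_{a_N}$ is, up to Karoubi closure (handled via Lemma \ref{wwstor}), generated by cones of $S_N$. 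Thus Theorem \ref{dirsum} applies to $L'^{a_N}$, and the hypothesis at $i=N$ yields $L'^{a_N}(X)\in(\cu/\du'_{a_N})_{w\ge -N+1}$, so the strong left weight lifting property (after the shift by $N-1$) produces a distinguished triangle
\[
T_{big}\to X\to Y\to T_{big}[1]
\]
with $T_{big}\in(\obj\du'_{a_N})\cap\cu_{w\le -N}$ and $Y\in\cu_{w\ge -N+1}$.

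Using $\du'_{a_N}=K\star\du_{a_N}$ I would pick a triangle $T\to T_{big}\to D\to T[1]$ with $T\in\obj K$ and $D\in\obj\du_{a_N}$, and weight-decompose $D$ inside $\du_{a_N}$ (where $w$ restricts) as $D_1\to D\to D_2\to D_1[1]$ with $D_1\in(\obj\du_{a_N})\cap\cu_{w\le -N}$ and $D_2\in(\obj\du_{a_N})\cap\cu_{w\ge -N+1}$. The octahedron on $T\to T_{big}\to X$ yields the sought triangle $T\to X\to M\to T[1]$ together with an auxiliary triangle $D\to M\to Y\to D[1]$; a second octahedron on $D_1\to D\to M$ produces $D_1\to M\to Z'\to D_1[1]$ with $Z'$ an extension of $Y$ by $D_2$, whence $Z'\in\cu_{w\ge -N+1}$ by extension-closedness.

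To conclude I would check that $Z'$ (serving as $M'$) satisfies $L^{a_i}(Z')\in(\cu/\du_{a_i})_{w\ge -i+1}$ for every $i\in\z$. For $i\ge N$ this is automatic from $Z'\in\cu_{w\ge -N+1}\subset\cu_{w\ge -i+1}$ and weight-exactness of $L^{a_i}$. For $i<N$, the inclusion $\du_{a_N}\subset\du_{a_i}$ gives $L^{a_i}(D_2)=0$ so $L^{a_i}(Z')\cong L^{a_i}(Y)$, and the triangle $T_{big}\to X\to Y$ combined with $L'^{a_i}(T_{big})=0$ (since $\du'_{a_N}\subset\du'_{a_i}$) and the hypothesis yields $L'^{a_i}(Y)\in(\cu/\du'_{a_i})_{w\ge -i+1}$. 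For Part 1 the assumption $a_l=0$ makes the condition vacuous for $i\le l$ and reduces the remaining indices to a finite set, which are handled by iterated application of Theorem \ref{dirsum} to the intermediate Ore localizations $\cu/\du_{a_i}\to\cu/\du'_{a_i}$, lifting the resulting decompositions back to $\cu$ and modifying $Y$ accordingly. For Part 2, any residual $K$-torsion that cannot be eliminated is absorbed into a trailing piece $Y'\in\cu_{w\ge -N'}$ via a further weight decomposition $M'\to Z'\to Y'$ at weight $-N'$.

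The main obstacle is the transfer, for $i<N$, of the $L'^{a_i}$-weight bound on $Y$ to an $L^{a_i}$-weight bound: the kernel of the further localization $\cu/\du_{a_i}\to\cu/\du'_{a_i}$ is $K$-torsion, and managing it requires either a finite iteration of the strong left weight lifting property (feasible in Part 1 thanks to the $a_l=0$ hypothesis, which bounds the iteration) or the flexibility of the trailing weight-$\ge -N'$ piece (Part 2, where the torsion is simply absorbed into $Y'$).
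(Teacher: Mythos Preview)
Your opening is correct and essentially the same as the paper's: verifying the left Ore condition for $S_N$, applying Theorem \ref{dirsum} to $\cu\to\cu/\du'_{a_N}$, and then using $\du'_{a_N}=K\star\du_{a_N}$ together with a weight decomposition of $D$ inside $\du_{a_N}$ and two octahedra reproduces exactly the paper's preliminary statement that $X\in(\obj K)\star\bigl((\obj\du_{a_n})\cap\cu_{w\le -n}\bigr)\star\cu_{w\ge -n+1}$ for any $n$. You also correctly isolate the crux: passing from the bound on $L'^{a_i}(Z')$ to a bound on $L^{a_i}(Z')$ for $i<N$.

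But your resolution of that crux is not a proof. In Part 1 you propose to apply Theorem \ref{dirsum} to the localizations $\cu/\du_{a_i}\to\cu/\du'_{a_i}$ and then ``lift the resulting decompositions back to $\cu$''. A distinguished triangle obtained in $\cu/\du_{a_i}$ has no reason to lift to one in $\cu$, so this step fails as stated; and even if you instead iterate Theorem \ref{dirsum} inside $\cu$ (for $\cu\to\cu/\du'_{a_i}$, $i=N-1,N-2,\dots$), you have not shown that each modification preserves the bounds already established at higher levels, nor how the accumulated $K$-pieces reassemble into a single $T$ in the triangle $T\to X\to M$. In Part 2 your ``absorption'' step is likewise broken: a plain weight decomposition $M'\to Z'\to Y'$ at weight $-N'$ yields $M'\in\cu_{[-N+1,-N'-1]}$, and nothing forces $L^{a_i}(M')\in(\cu/\du_{a_i})_{w\ge -i+1}$ for $i\le N'$.

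The paper avoids both problems by a different organization. For Part 1 it does \emph{not} decompose at level $N$ first; it sets up an induction on $N-n$ (where $n=\max\{i:a_i=0\}$), quantified over all triangulated categories with such a filtration. In the inductive step it decomposes $X$ at level $l$ (the first level with $a_l>0$), obtaining a piece $Y_0\in(\obj\du_{a_l})\cap\cu_{w\le -l}$, and then applies the inductive hypothesis to $Y_0$ \emph{inside the subcategory} $\du_{a_l}$ with the shifted sequence $a_{i,\du_{a_l}}=\max\{a_i-a_l,0\}$ (for which the zero index has moved up, so the induction applies). Because the recursion happens in $\du_{a_l}\subset\cu$, every triangle is already a triangle in $\cu$ and no lifting from a quotient is needed; the $K$-pieces are then merged via Corollary \ref{centestcent}. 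Part 2 is reduced to Part 1 by first decomposing at level $N'+1$ (not $N$), so that the leftover piece $Y_0$ lands in $\du_{a_{N'+1}}$, where the shifted sequence automatically has a zero entry and Part 1 applies directly.
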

\begin{proof}
Combining  Theorem \ref{dirsum} with  Lemma \ref{lwp}(2)   and Proposition \ref{extww} we obtain that $X \in ((\obj K) \star (\obj  \du_{a_n})) \star \cu_{w\ge -n+1} = ((\obj K) \star ((\obj \du_{a_n})\cap \cu_{w\le -n})) \star \cu_{w \ge -n+1}$. 
By Corollary \ref{centestcent}(2), $X$ belongs to $(\obj K) \star (((\obj \du_{a_n})\cap \cu_{w\le -n}) \star \cu_{w \ge -n+1}) = (\obj K) \star (((\obj \du_{a_n})\cap \cu_{w\le -n}) \star \cu_{w \ge -n+1})$.

Hence for any integer $n$ there exists a distinguished triangle $T_0 \to X \to X' \to T_0[1]$ such that $T_0 \in \obj K$ and $X'$ is an extension of an element $X''$ of $\cu_{w\ge -n+1}$ by an element $Y_0$ of $(\obj \du_{a_n})\cap \cu_{w\le -n}$. 
We will use this statement later in the proof.

Now we prove the first assertion.

1. 
Denote by $n$ the maximal integer such that $a_n = 0$.
We will prove the statement using induction on $N - n$ for all triangulated categories $\cu$ with a weight structure and a filtration by full subcategories $\du_i$, to which $w$ restricts. 

In the base case $n\ge N$. 
As we have noticed in the beginning of the proof there exists a distinguished triangle $T_0 \to X \to X' \to T_0[1]$ such that $X'$ is an extension of an element $X''$ of $\cu_{w\ge -N+1}$ by an element $Y_0$ of $(\obj \du_{a_N})\cap \cu_{w\le -N}$. Note that $L^{a_i}(X'') \in (\cu/\du_{a_i})_{w\ge -i+1}$ for all $i\ge n$ by exactness of the localization functor and $L^{a_i}(X'') = L^{0}(X'') \in (\cu/\du_0)_{w\ge -i+1}$ for $i<n$ since $\du_0 = \cu$.

Now we describe the inductive step. Let us assume that the statement is true for all triangulated categories with a weight structure 
  equipped with a filtration by full subcategories and $a_k$ being a sequence such that $a_{n}=0$ and $a_k >0$ for $k> n$ for some $n\ge l$. We want to prove the statement for a triangulated category $\cu$ with a filtration by $\du_i$ and $a_k$ being a sequence such that $a_{l-1} =0$ and $a_k >0$ for $k>l-1$.

Let $X$ be an object of $ \cu$ be such that $L^{'a_i}(X) \in (\cu/\du'_{a_i})_{w\ge -i+1}$ for any $i \in \mathbb{Z}$. 
The argument  in the beginning of the proof shows that there exists a distinguished triangle $T_0 \to X \to X' \to T_0[1]$ such that $T_0 \in \obj K$ and $X'$ is an extension of an element $X''$ of $\cu_{w\ge -l+1}$ by an element $Y_0$ of $(\obj \du_{a_l})\cap \cu_{w\le -l}$.

Certainly, $L^{'a_j}(X') \in (\cu/\du'_{a_j})_{w\ge -j+1}$ for any integer $j\in\z$. Moreover, 
$L^{'a_j}(X''[-1]) \in (\cu/\du'_{a_j})_{w\ge -j+1}$ for all $j> l$ by the weight-exactness of the corresponding localization functors and $L^{'a_j}(X''[-1]) \in (\cu/\du'_{a_j})_{w\ge -j+1}$ for all $j\le l$ since $L^{'a_j}(Y_0)=0$  for any $j \le l$. 
By the extension-closedness of the classes $(\cu/\du'_{a_j})_{w\ge -j+1}$ we have $L^{'a_j}(Y_0) \in (\cu/\du'_{a_j})_{w\ge -j+1}$. 

Now we apply the inductive assumption to the object $Y_0$ in $\du_{a_l}$, where the filtration on $\du_{a_l}$ is the one induced from $\cu$ and $a_{i,\du_{a_l}} = \max\{a_i-a_l,0\}$. Obviously, $a_{l,\du_{a_l}} = 0$; hence  the maximal integer $i$ such that $a_{i,\du_{a_l}} = 0$ is bigger or equal than $l$, and we can apply the inductive assumption. 
We obtain that $Y_0$ belongs to the class $(\obj K) \star (Q \star Y)$, where  $L^{a_{j,\du_l}}(Y) \in (\du_{a_l}/\du_{a_j})_{w\ge -j+1}$ for $j \in \z$, $Y\in\cu_{w\ge -N+1}\cap (\obj\du_l)$ and $Q \in (\obj \du_{a_N})\cap \cu_{w\le -N}$. 
Thus $X \in (\obj K) \star (((\obj K) \star (Q \star Y)) \star X'')$. 
By Corollary \ref{centestcent}(2), $X$ also belongs to $(\obj K) \star (Q \star (Y \star X''))$.  
Recall what it means: there exists a distinguished triangle $T \to X \to M \to T[1]$ such that $M$ is an extension of $M' \in Y \star X''$ by $Q \in (\obj \du_{a_N})\cap \cu_{w\le -N}$. 
Note that $L^{a_j}(Y)$ belongs to $(\cu/\du_{a_i})_{w\ge -i+1}$ for $i> l$ by the definition of $a_{i,\du_l}$ and $L^{a_j}(Y)$ belongs to $(\cu/\du_{a_i})_{w\ge -i+1}$ for $j\le l$ since $Y\in \obj\du_l$.  
Since $L^{a_i}(Y)$ and $ L^{a_i}(X'')$  belong to $(\cu/\du_{a_i})_{w\ge -i+1}$ for $i \in \z$ and $X'',Y$ belong to $\cu_{w\ge -N+1}$, 
we obtain the desired properties of the triangle.

2.  
As shown in the beginning of this proof, there  exists a distinguished triangle $T_0 \to X \to X' \to T_0[1]$ such that $T_0 \in \obj K$ and $X'$ is an extension of an element $X''$ of $\cu_{w\ge -N'}$ by an element $Y_0$ of $(\obj \du_{a_{N'+1}})\cap \cu_{w\le -N'-1}$. 
On the one hand $Y_0$ belongs to $\obj \du_{a_{N'+1}}$, so $L^{'a_i}(Y_0)=0$ belongs to $(\cu/\du'_{a_i})_{w\ge -i+1}$ for any $i \le N'+1$. On the other hand $L^{'a_i}(Y_0) \in (\cu/\du'_{a_i})_{w\ge -i+1}$ as it is an extension of $L^{'a_i}(X')\in (\cu/\du'_{a_i})_{w\ge -i+1}$ by $L^{'a_i}(X''[-1]) \in (\cu/\du'_{a_i})_{w\ge -N'+1}$. 
Applying the previous assertion to the object $Y_0$ in $\du_{a_{N'+1}}$ (where the filtration is the one induced from $\cu$ and $a_{i,\du_{a_{N'+1}}} = a_i - a_{N'+1}$) we obtain that $Y_0 \in (\obj K) \star (Q\star M')$, where $L^{a_i}(M') \in (\cu/\du_{a_i})_{w\ge -i+1}$ for any $i\in \z$ and $Q \in (\obj \du_{a_N})\cap \cu_{w\le -N}$.  

We obtain that $X$ belongs to $(\obj K) \star ((\obj K) \star (Q\star M')) \star \{X''\}$. By Corollary \ref{centestcent}(2), there exists a distinguished triangle $T \to X \to M \to T[1]$ such that $T$ is torsion and $M$ belongs to $((\obj \du_{a_N})\cap \cu_{w\le -N})\star \{M'\} \star \cu_{w\ge -N'}$, where $L^{a_i}(M') \in (\cu/\du_{a_i})_{w\ge -i+1}$ for any $i\in \z$.
\end{proof}


\begin{rema}\label{indc}
The proof above wouldn't work if we only knew that every element of $(\cu/\du'_{a_i})_{w\ge 0}$ is a retract of an element of $L^{'a_i}(\cu_{w\ge 0})$.


Indeed, let $X$ be an element of $\cu_{w\ge n}$ such that $L^{'a_{n+1}}(X)$ belongs to $(\cu/\du'_{a_{n+1}})_{w\ge n+1}$. 
To make the inductive step we should be able to present $X$ as an element of the extension-closure of the class $(\obj \du_{a_{n+1}}) \cup (\obj K) \cup \{X'\}$, where $X'$ is some object of smaller length than $X$ (in our proof we have $X' \in \cu_{w\ge n})$. However, we can do this for arbitrary $X$ if and only if the localization functor $L^{'a_{n+1}}$ satisfies the weight lifting property. 

\end{rema}

\begin{rema}\label{tateappl}
1. If $\cu$ is $R$-linear, then  the conclusion of Proposition \ref{torsindep} certainly remains valid if we take $J \subset R$.

2. If the sequence $a_i$ is integral and stabilizes, i.e. there exists $n$ such that $a_i =a_{i+1}\in \n$ for $i\ge n$ (equivalently, $a_i$ is bounded above) then Proposition \ref{torsindep}(1) yields the following:

For any $M$ such that  $L^{'a_i}(M) \in (\cu/\du'_{a_i})_{w\ge -i+1}$  there exists a distinguished triangle $T\to X\to M \to T[1]$ such that $T \in \obj K$ and $L^{a_i}(M) \in (\cu/\du_{a_i})_{w\ge -i+1}$. 

Indeed, by Proposition \ref{torsindep}(1) we have a distinguished triangle $T\to X\to M \to T[1]$ such that $T \in \obj K$ and $M$ is an extension of $M'$ such that $L^{a_i}(M') \in (\cu/\du_{a_i})_{w\ge -i+1}$ by an element $Q$ of $(\obj \du_{a_n})\cap \cu_{w\ge -n+1}$. 
Since $a_i\le n$, $L^{a_i}(Q) = 0$ and thus $L^{a_i}(M) \in (\cu/\du_{a_i})_{w\ge -i+1}$.

3. Adopt the notation of Remark \ref{tappl}(2) (so, $\cu = DMT^{eff}_{gm}(F) = DMT^{eff}_{gm}(F;\z)$ and $\du = DMT^{eff}_{gm}(F)(1)$); let $J$ be the set of  non-zero integers. 

Let $X$ be an object of $DMT^{eff}_{gm}(F)$ such that $H^i_M(X,\q)=\ns$  for $i \ge -n$. 
Then Remark \ref{tappl}(2) 
 yields that $L(X\otimes \q)$ belongs to $K^b(\q-\operatorname{vect})_{w\ge n+1} \cong DMT^{o}_{gm,w\ge n+1}(F)\otimes \q$ (recall that  $\q-\operatorname{vect}$ is the category of finite-dimensional $\q$-vector spaces). 
Certainly, Proposition \ref{torsindep}(1) (and also the previous part of the current remark) implies that there exists a distinguished triangle $T \to X \to X' \to T[1]$ in $DMT^{eff}_{gm}(F)$ such that $T$ is a torsion motive and the groups $H^i_M(X',\z/m)$ vanish for $i\le n, m \in \z$. 

Roughly speaking, this means that one can cut off 
the "torsion part" of weight $0$ motivic cohomology from a Tate motive. 

As in Remark \ref{tappl}(2) we can consider $H^i_M(-,\q/\z)$ instead of $H^i_M(-,\z/m)$ for all $m$.
\end{rema}



%

\begin{rema}\label{rkacl}
Our results also imply that certain extension-closed subsets of triangulated categories turn out to be Karoubi-closed. 

Theorem \ref{dirsum} implies that the extension-closure of the set $((\obj \du) \cap \cu_{w\le 0}) \cup (\cu_{w\ge 0})$ is Karoubi-closed.

For a system of subcategories  $\du_i$, integers $N$ and $N'$, and a non-decreasing sequence $a_i$ (as in Proposition \ref{torsindep})
one has the extension-closure of the set $\bigcup\limits_{i\in \z} ((\obj \du_{a_i}) \cap \cu_{w= -i}) \cup \cu_{w\ge -N'}$   being Karoubi-closed. 
Indeed, the set is equal to the extension-closure of the set $\bigcap\limits_{i\in \z} ((\obj \du_{a_i}) \cap \cu_{w\le -i}) \cup (\cu_{w\ge -i+1}) \cup \cu_{w\ge -N'}$. The latter is Karoubi-closed as it is an intersection of Karoubi-closed subsets.


Moreover, if $a_l=0$ for some $l$ one certainly obtains that the extension-closure of the set $\bigcup\limits_{i\in \z} ((\obj \du_{a_i}) \cap \cu_{w= -i})$ is Karoubi-closed.


\end{rema}

\end{document}